\documentclass{amsart}
\title[Conditional plasticity]{Conditional plasticity of the unit ball of the \(\ell_\infty\)-sum of finitely many strictly convex Banach~spaces}
\author{Kaarel August Kurik}
\address{Institute of Mathematics and Statistics, University of Tartu, Narva
mnt 18, 51009 Tartu, Estonia}
\email{kaarelaugustkurik@gmail.com}

\subjclass[2020]{46B20, 47H09, 05C69}
\keywords{non-expansive map; unit ball; plastic metric space}
\date{}

\usepackage[colorlinks=true,linkcolor=blue,urlcolor=blue,citecolor=blue,filecolor=black]{hyperref}
\usepackage[citestyle=numeric, backend=biber, maxbibnames=99, bibstyle=numeric]{biblatex}
\usepackage{amssymb, mathtools}
\usepackage{physics}
\addbibresource{combined.bib}

\newtheorem{theorem}{Theorem}[section]
\newtheorem{lemma}{Lemma}[section]

\theoremstyle{definition}

\newtheorem{definition}{Definition}[section]

\newtheorem*{prob*}{Problem}

\NewDocumentCommand{\ihom}{}{g}
\NewDocumentCommand{\lip}{}{f}
\NewDocumentCommand{\RR}{}{\mathbb{R}}
\NewDocumentCommand{\NN}{}{\mathbb{N}}
\NewDocumentCommand{\ZZ}{}{\mathbb{Z}}
\NewDocumentCommand{\QQ}{}{\mathbb{Q}}
\NewDocumentCommand{\pih}{}{\hat{\pi}}
\NewDocumentCommand{\annotation}{}{(Geometric)}
\NewDocumentCommand{\card}{m}{\abs{#1}}
\NewDocumentCommand{\bcn}{}{B_*}
\NewDocumentCommand{\bbox}{}{B_\square}
\NewDocumentCommand{\xfam}{}{(x_i)_{i \in [2^n]}}
\NewDocumentCommand{\yfam}{}{(y_i)_{i \in [2^n]}}
\newcommand{\cupdot}{\mathbin{\mathaccent\cdot\cup}}

\newcommand{\clo}[1]{\overline{#1}}

\begin{document}

\begin{abstract}
    We prove that for any $\ell_\infty$-sum $Z = \bigoplus_{i \in [n]} X_i$ of finitely many strictly
    convex Banach spaces $(X_i)_{i \in [n]}$, an extremeness preserving 1-Lipschitz bijection
    $\lip\colon B_Z \to B_Z$ is an isometry, by constraining the componentwise behavior of the inverse
    $\ihom=\lip^{-1}$ with a theorem admitting a graph-theoretic interpretation.
    We also show that if $X, Y$ are Banach spaces, then a bijective 1-Lipschitz non-isometry of
    type $B_X \to B_Y$ can be used to construct a bijective 1-Lipschitz non-isometry of type
    $B_{X'} \to B_{X'}$ for some Banach space $X'$, and that a homeomorphic 1-Lipschitz non-isometry of
    type $B_X \to B_X$ restricts to a homeomorphic 1-Lipschitz non-isometry of type
    $B_S \to B_S$ for some separable subspace $S \leq X$.
\end{abstract}

\maketitle

\section{Introduction}

The central aim of this article is to present a generalization of a key lemma
found in Nikita Leo's proof of the plasticity of the closed unit ball of
the $\ell_\infty$-sum of two strictly convex Banach spaces, where the generalization
extends the lemma to the $\ell_\infty$-sum of any finite number of strictly convex Banach
spaces by way of a graph-theoretic analogue. In addition, the generalized lemma is applied to prove that any 1-Lipschitz bijection from
the closed unit ball of such a space to itself
which maps extreme points to extreme points
or the sphere into itself must be an isometry.

Two additional results are also presented, which may prove no less important for the study of plasticity than the main result. The first states that the existence of a non-isometric 1-Lipschitz bijection between the unit balls of two distinct Banach spaces implies the existence of a non-isometric 1-Lipschitz bijection from the unit ball of some Banach space to itself, thereby proving that the general question of unit ball plasticity for Banach space pairs is equivalent to unit ball plasticity for single Banach spaces. The second states that the homeomorphic plasticity of the unit ball for a Banach space is equivalent to the homeomorphic plasticity of all of the space's separable subspaces.

\section{Preliminaries and notation}

\subsection{Background}

The notion of plasticity for metric spaces was introduced by Naimpally, Piotrowski, and Wingler in their 2006 article \autocite{naimpally:2006}.
A metric space is said to be \textit{EC-plastic} (or just \textit{plastic}) when
all 1-Lipschitz bijections from the space into itself are isometries.

In their 2016 article \autocite{cascales:2016}, Cascales, Kadets, Orihuela, and Wingler began an investigation
of the following question.

\begin{prob*}
    Is the closed unit ball of every Banach space plastic?
\end{prob*}

In said article, this question was answered affirmatively for the special case of strictly convex Banach spaces. (Recall that a Banach
space is strictly convex if its
unit sphere contains no segments with distinct
endpoints.) The general case, however, remains open.

All totally bounded metric spaces are known to be plastic, including the unit balls of finite-dimensional Banach spaces \autocite{naimpally:2006}.

The unit ball is also known to be plastic
in the following cases:
\begin{itemize}
    \item spaces whose unit sphere is a union of finite-dimensional polyhedral extreme subsets (incl. all strictly convex Banach spaces) \autocite{angosto:2019,cascales:2016},
    \item any $\ell_1$-sum of strictly convex Banach spaces 
    (incl. $\ell_1$ itself) \autocite{kadets_zavarzina:2016,kadets_zavarzina:2018},
    \item the $\ell_\infty$-sum of *two* strictly convex Banach spaces \autocite{haller:2022},
    \item $\ell_1 \oplus \RR$ \autocite{haller:2022},
    \item $C(K)$, where $K$ is a compact metrizable space with finitely many
    accumulation points (incl. $c \cong C(\omega+1)$, i.e. the space of convergent real sequences)
    \autocite{fakhoury:2024,leo:2022}.
\end{itemize}

In \autocite{haller:2022}, it was shown (with proof essentially due to Nikita Leo)
that the $\ell_\infty$-sum of two strictly
convex Banach spaces has a plastic unit ball. While the proof does not directly apply to an arbitrary finite sum of strictly convex Banach spaces, a crucial step in the proof can be modified to suit this purpose. By generalizing this step, we can establish a similar but weaker property than plasticity, which only considers a specific class of
1-Lipschitz bijections that is well-behaved with respect to extreme points.

\subsection{Conventions, notation}

We adopt the conventions that $0 \in \NN$ and $[n] = {i \in \NN \colon i < n}$.

For any map $f$ from a metric space $(M,d)$ to itself,
we say that it is \textit{non-expansive} when it is a 1‑Lipschitz
map, and that it is \textit{non-contractive} when for all
$x, y \in M$, we have $d(x,y) \leq d(f(x), f(y))$ (note
that this is dual to the inequality $d(x,y) \geq d(f(x), f(y))$ defining
1‑Lipschitz maps).

\section{Standalone results}

We begin with two theorems relating to plasticity which can be stated and proved without much preamble, and which are independent of both each other and the remainder of the article.

\NewDocumentCommand{\induced}{}{\lip'}

\begin{theorem}
    Suppose there are Banach spaces $X, Y$, and a non-expansive bijection $\lip \colon B_X \to B_Y$ such that $\lip$ is not an isometry. Then there is a Banach space $Z$ and a non-expansive bijection $\induced \colon B_Z \to B_Z$ such that $\induced$ is not an isometry.
\end{theorem}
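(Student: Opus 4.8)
The plan is to realise $B_Z$ as a doubly infinite $\ell_\infty$-product of copies of $B_X$ and $B_Y$, arranged so that the ``interface'' between the two kinds of factor can be pushed over by one slot using $\lip$ alone---never invoking $\lip^{-1}$, which (since $\lip$ is not an isometry) is non-contractive but not non-expansive and would wreck the Lipschitz estimate. Concretely, put $W_n = X$ for $n \le 0$ and $W_n = Y$ for $n \ge 1$, let $Z = \bigl(\bigoplus_{n \in \ZZ} W_n\bigr)_{\ell_\infty}$, and note that $B_Z = \prod_{n \in \ZZ} B_{W_n}$ carries the supremum metric $d_Z\bigl((w_n),(w'_n)\bigr) = \sup_{n \in \ZZ}\norm{w_n - w'_n}$. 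Define $\induced \colon B_Z \to B_Z$ by $\induced(w)_n = w_{n-1}$ for $n \ne 1$ and $\induced(w)_1 = \lip(w_0)$; every coordinate lands in the correct ball, since $w_{n-1} \in B_X$ for $n \le 0$, $w_{n-1} \in B_Y$ for $n \ge 2$, and $\lip(w_0) \in B_Y$.

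I would then verify the three required properties. Bijectivity: $\induced$ is the right shift on $\ZZ$-indexed sequences followed by replacing the entry of $B_X$ that the shift has moved into the type-$Y$ slot $1$ with its image under $\lip$; as the right shift and $\lip$ are bijections, so is $\induced$, with $\induced^{-1}(v)_n = v_{n+1}$ for $n \ne 0$ and $\induced^{-1}(v)_0 = \lip^{-1}(v_1)$. Non-expansiveness: from the definition and the fact that $\lip$ is $1$-Lipschitz,
\begin{align*}
d_Z\bigl(\induced(w), \induced(w')\bigr)
&= \max\Bigl(\sup_{n \ne 1}\norm{w_{n-1} - w'_{n-1}},\;\norm{\lip(w_0) - \lip(w'_0)}\Bigr) \\
&\le \sup_{m \in \ZZ}\norm{w_m - w'_m} = d_Z(w,w').
\end{align*}
Failure of isometry: since $\lip$ is not an isometry, fix $a,b \in B_X$ with $\norm{\lip(a)-\lip(b)} < \norm{a-b}$, and let $w,w' \in B_Z$ have $a$, resp.\ $b$, in coordinate $0$ and $0$ in every other coordinate; then $\induced(w),\induced(w')$ have $\lip(a)$, resp.\ $\lip(b)$, in coordinate $1$ and $0$ elsewhere, so $d_Z\bigl(\induced(w),\induced(w')\bigr) = \norm{\lip(a)-\lip(b)} < \norm{a-b} = d_Z(w,w')$.

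The real obstacle here is not any of these checks---each is essentially a one-liner---but finding the construction. The obvious first try, the swap $(x,y) \mapsto (\lip^{-1}(y),\lip(x))$ on $B_{X \oplus_\infty Y}$, is forced to use $\lip^{-1}$, whose local distortion can be arbitrarily large, so it is not non-expansive; and a one-sided $\ell_\infty$-tower is no help, since a one-sided shift cannot be a bijection. The doubly infinite tower is precisely what lets a bijective shift absorb the rearrangement while keeping only $\lip$ (not $\lip^{-1}$) in the forward map. The same argument works with the $c_0$-sum in place of the $\ell_\infty$-sum, as $\induced$ merely permutes coordinates up to a single application of $\lip$ and hence preserves vanishing at infinity.
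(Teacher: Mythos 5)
Your construction is exactly the paper's: a doubly infinite $\ell_\infty$-sum with copies of $X$ on one side and $Y$ on the other, and a shift map that applies $\lip$ at the interface (the paper indexes the interface at $0$ rather than $1$, which is immaterial). All three verifications are correct, so the proposal is sound and takes essentially the same approach as the paper.
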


This result is motivated by the work of Olesia Zavarzina in \autocite{zavarzina:2017}.

\begin{proof}
    Let $C_i$ be a Banach space for each $i \in \ZZ$, such that $C_i = X$
    for $i < 0$ and $C_i = Y$ for $i \geq 0$. Take $Z \coloneqq \bigoplus_{i=-\infty}^\infty C_i$ with the $\infty$-norm. Define $\induced \colon B_Z \to B_Z$ by $\pi_i \induced(z) = \pi_{i-1} z$ for $i \neq 0$ and $\pi_0 \induced(z) = \lip(\pi_{-1}z)$.
  
    It is clear by inspection that the codomain of $\induced$ is correct and that it is a non-expansive bijection. That it is not an isometry follows from considering the natural inclusions of two points $x, x' \in C_{-1}$ into $Z$, where $\norm{\induced(x) - \induced(x')} = \norm{\lip(x)-\lip(x')} < \norm{x-x'}$.
\end{proof}

\begin{lemma}
    \label{rat-scaling}
    Let $X$ be a Banach space and let $A \subseteq X$ be closed under scaling by rationals. Then $\clo{A \cap B_X} = \clo{A} \cap B_X$.
\end{lemma}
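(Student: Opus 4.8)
The plan is to prove the two set inclusions separately, the forward one being immediate and the reverse one requiring a rational-rescaling argument.

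The inclusion $\clo{A \cap B_X} \subseteq \clo{A} \cap B_X$ is trivial: the set $A \cap B_X$ is contained both in $\clo A$ and in $B_X$, and $B_X$ is closed, so its closure is contained in $\clo A \cap B_X$. For the reverse inclusion I would fix $x \in \clo A \cap B_X$ and manufacture a sequence in $A \cap B_X$ converging to it. If $A = \emptyset$ the statement is vacuous, so assume $A \neq \emptyset$, and pick $a_n \in A$ with $a_n \to x$; then $\norm{a_n} \to \norm{x} \le 1$. The point is to replace each $a_n$ by a rational multiple $q_n a_n$ that lies in $B_X$, staying in $A$ by closure under rational scaling and still converging to $x$. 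Concretely, set $m_n \coloneqq \min\{1, 1/\norm{a_n}\}$ when $a_n \neq 0$ and $m_n \coloneqq 1$ otherwise; since $\norm{a_n} \to \norm{x} \le 1$, one checks $m_n \to 1$ (if $\norm x < 1$ then $m_n = 1$ eventually; if $\norm x = 1$ then either $m_n = 1$ or $m_n = 1/\norm{a_n} \to 1$). Choose rationals $q_n \in (0, m_n]$ with $q_n \to 1$, which is possible precisely because $m_n \to 1$ (each interval $(\max\{0, m_n - 1/n\}, m_n]$ is a nonempty interval of reals, hence meets $\QQ$, and the left endpoints tend to $1$). Then $q_n a_n \in A$ by hypothesis; moreover $\norm{q_n a_n} = q_n \norm{a_n} \le 1$, since $q_n \le m_n \le 1/\norm{a_n}$ when $a_n \neq 0$ and the bound is trivial when $a_n = 0$; hence $q_n a_n \in A \cap B_X$. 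Finally $q_n a_n \to 1 \cdot x = x$, so $x \in \clo{A \cap B_X}$.

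The only delicate point is the boundary case $\norm{x} = 1$: an approximant $a_n \in A$ may lie slightly outside $B_X$, and one must shrink it by a factor tending to $1$ while remaining inside $A$ — which is exactly what closure under rational scaling provides (an arbitrary closed set, or one closed only under scaling by integers, would not suffice here). Beyond this, I expect only routine bookkeeping: handling the degenerate subcases $A = \emptyset$ and $a_n = 0$, and verifying $m_n \to 1$.
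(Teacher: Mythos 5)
Your proof is correct and follows essentially the same route as the paper's: the forward inclusion by closedness of $\clo{A}\cap B_X$, and the reverse inclusion by rescaling an approximating sequence from $A$ by rationals $q_n \to 1$ chosen below $1/\norm{a_n}$ so that the rescaled sequence stays in $A \cap B_X$. Your uniform treatment via $m_n = \min\{1, 1/\norm{a_n}\}$ merely merges the paper's two cases $\norm{x}<1$ and $\norm{x}=1$ into one; the underlying idea is identical.
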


\begin{proof}
Since $A \cap B_X \subseteq \clo{A} \cap B_X$ and the latter is closed, we
have $\clo{A \cap B_X} \subseteq \clo{A} \cap B_X$. It thus suffices to show
the opposite inclusion. Fix any $a \in \clo{A} \cap B_X$ and a sequence $a_i
\in A$ that converges to $a$. If $\norm{a} < 1$, then $a_i \in A \cap B_X$ for
all sufficiently large $i$, from which $a \in \clo{A \cap B_X}$. If $\norm{a} =
1$, then choose a sequence of rationals $q_i \in \QQ$ such that $\abs{q_i} \leq 1/
\norm{a_i}$ for all sufficiently large $i$, and $q_i \to 1$. This is possible
since $\norm{a_i} \to \norm{a} = 1$. We then have $q_i a_i \in A \cap B_X$ for all
sufficiently large $i$, and $q_i a_i \to a$, so $a \in \clo{A \cap B_X}$. We thus
have that $\clo{A} \cap B_X \subseteq \clo{A \cap B_X}$, so $\clo{A} \cap B_X =
\clo{A \cap B_X}$.
\end{proof}

\newcommand\restr[2]{\ensuremath{\left.#1\right|_{#2}}}

\begin{theorem}
    Let $X$ be a Banach space and $\lip \colon  B_X \to B_X$ be a non-expansive homeomorphism that is not an isometry. Then $X$ has a separable closed subspace $Y$ such that $\lip(B_Y) = B_Y$ and $\rho \coloneqq \restr{\lip}{B_Y}  \colon  B_Y \to B_Y$ is a non-expansive homeomorphism that is not an isometry.
\end{theorem}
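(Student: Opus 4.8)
The plan is to build $Y$ as the closure of an increasing chain of separable closed subspaces $Y_0 \subseteq Y_1 \subseteq \cdots$ of $X$, each step enlarging the subspace enough to absorb the images of its closed unit ball under both $\lip$ and $\lip^{-1}$; note $\lip^{-1}$ is continuous, since $\lip$ is assumed to be a homeomorphism. As $\lip$ is non-expansive but not an isometry, fix $x_0, x_0' \in B_X$ with $\norm{\lip(x_0) - \lip(x_0')} < \norm{x_0 - x_0'}$ and set $Y_0 \coloneqq \clo{\operatorname{span}}\{x_0, x_0'\}$. Given a separable closed subspace $Y_n$, the ball $B_{Y_n} = Y_n \cap B_X$ is a separable metric space, so its continuous images $\lip(B_{Y_n})$ and $\lip^{-1}(B_{Y_n})$ are separable subsets of $B_X$; put
\[
    Y_{n+1} \coloneqq \clo{\operatorname{span}}\bigl(Y_n \cup \lip(B_{Y_n}) \cup \lip^{-1}(B_{Y_n})\bigr),
\]
which is again separable, closed, and contains $Y_n$. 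Finally let $Y \coloneqq \clo{\bigcup_n Y_n}$, a separable closed subspace of $X$, since a countable union of separable sets is separable and taking closures preserves separability.

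The one step that needs care — and the reason \autoref{rat-scaling} was set up — is the identity $B_Y = \clo{\bigcup_n B_{Y_n}}$, which keeps points of the unit sphere of $Y$ from slipping out of the density argument below. Here $A \coloneqq \bigcup_n Y_n$ is an increasing union of linear subspaces, hence a linear subspace, in particular closed under scaling by rationals; so \autoref{rat-scaling} yields $\clo{A \cap B_X} = \clo{A} \cap B_X = Y \cap B_X = B_Y$, while $A \cap B_X = \bigcup_n(Y_n \cap B_X) = \bigcup_n B_{Y_n}$, giving the claim.

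It then remains to check $\lip(B_Y) = B_Y$, which is routine. For $\lip(B_Y) \subseteq B_Y$: any $y \in B_Y$ is a limit of points $y_k \in B_{Y_{n_k}}$ for suitable indices $n_k$, and $\lip(y_k) \in \lip(B_{Y_{n_k}}) \subseteq Y_{n_k+1} \subseteq Y$ while $\lip(y_k) \in B_X$, so $\lip(y_k) \in B_Y$; continuity of $\lip$ and closedness of $B_Y$ give $\lip(y) = \lim_k \lip(y_k) \in B_Y$. The same argument with $\lip^{-1}$ gives $\lip^{-1}(B_Y) \subseteq B_Y$, which upon applying $\lip$ is equivalent to $B_Y \subseteq \lip(B_Y)$; hence $\lip(B_Y) = B_Y$. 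Therefore $\rho \coloneqq \restr{\lip}{B_Y}$ is a bijection of $B_Y$ onto itself, non-expansive as a restriction of $\lip$, and a homeomorphism because its inverse is the continuous map $\restr{\lip^{-1}}{B_Y}$. Finally $\rho$ is not an isometry, since $x_0, x_0' \in Y_0 \subseteq Y$ lie in $B_Y$ and $\norm{\rho(x_0) - \rho(x_0')} = \norm{\lip(x_0) - \lip(x_0')} < \norm{x_0 - x_0'}$. The only genuinely delicate point in all of this is the sphere identity $B_Y = \clo{\bigcup_n B_{Y_n}}$; everything else is bookkeeping with the chain $(Y_n)$ and the continuity of $\lip$ and $\lip^{-1}$.
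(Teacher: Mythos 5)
Your proof is correct and follows essentially the same route as the paper: iterate a closure operation to obtain a separable subspace whose ball is invariant under $\lip$ and $\lip^{-1}$, invoke \autoref{rat-scaling} to identify $B_Y$ with the closure of the union of the intermediate balls, and finish by continuity. The only (immaterial) difference is that you enlarge separable closed subspaces at each stage, where the paper iterates a set operator on countable sets and takes the closure once at the end.
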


\begin{proof}
    Let $x, x' \in B_X$ be points for which $\norm{\lip(x)-\lip(x')} < \norm{x-x'}$.

    Define the set function $H  \colon  2^X \to 2^X$ as \[ H(S) = \lip(S \cap B_X) \cup \lip^{-1}(S \cap B_X) \cup \QQ \cdot S \cup (S+S). \] Note that $H(S)$ is countable whenever $S$ is countable, $S \subseteq H(S)$. Moreover, since $H$ is a union of set functions which are monotonic and continuous with respect to ascending chains of set inclusions, then $H$ is monotonic and continuous with respect to ascending chains also.
  
    Define $S_0 = \{x,x'\}$ and $S_{n+1} = H(S_n)$ for $n \in \NN$. Let $L = \bigcup_{n=0}^\infty S_n.$ Since $L$ is the limit of an ascending chain, we have $H(L) = \bigcup_{n=0}^\infty H(S_n) = \bigcup_{n=0}^\infty S_{n+1} = L$, so $L$ is a fixed point of $H$. Since $L$ is a countable union of countable sets, then $L$ is itself countable.
  
    Since $L$ is a fixed-point of $H$, we have that it is closed under addition and rational scaling, from which $\clo{L}$ is closed under addition and real scaling, so $\clo{L}$ is a closed subspace of $X$.
    Since $L$ is countable, $\clo{L}$ is separable. By \autoref{rat-scaling},
    we have that $\clo{L \cap B_X} = \clo{L} \cap B_X = B_{\clo{L}}$.
  
    Since $\lip$ is continuous and $L$ is closed under $\lip$, we have
    that $\lip(\clo{L \cap B_X}) \subseteq \clo{\lip(L \cap B_X)} \subseteq \clo{L \cap B_X}$, so $\lip(B_{\clo{L}}) \subseteq B_{\clo{L}}$. Analogously, we have $\lip^{-1}(B_{\clo{L}}) \subseteq B_{\clo{L}}$. From these, we have $\lip(B_{\clo{L}}) = B_{\clo{L}}$, so $\rho$ is a well-defined non-expansive homeomorphism. Since $x, x' \in B_{\clo{L}}$, we also have that $\rho$ is not an isometry.
\end{proof}

\section{Primary results} \label{sec:main}

\subsection{Conventions, notation} \label{sec:main_notation}

Throughout \autoref{sec:main}, we consider the following structure
and a certain weakening thereof, explained below:

\begin{itemize}
    \item $n$ is a fixed value in $\NN$ such that $n \geq 1$.
    \item $X_i$ is a family of strictly convex nontrivial Banach spaces, indexed by $i \in [n]$.
    \item $B_i, S_i$ are the unit ball and sphere of $X_i$ respectively.
    \item $Z = \bigoplus_{i \in [n]} X_i$ is the direct sum of the family $X_i$ endowed with the $infinity$-norm, i.e. if $z = (x_0, \ldots, x_{n-1}) \in Z$, then $\norm{z} = \max_{i \in [n]} \norm{x_i}$.
    \item $\pi_i \colon Z \to X_i$ is the projection onto the $i$-th component of $Z$.
    \item More generally, $\pi_i$ should be understood as the projection onto the $i$-th component of \textit{any} structure with components indexed by a set containing $i$.
    \item $\pih_i \colon Z \to \hat{X}_i$ is the complementary projection of the $i$-th component, where $\hat{X}_i = \bigoplus_{j \in [n] \setminus {i} }X_j$. For all $j \in [n] \setminus i$, we have that $\pi_j z = \pi_j \pih_i z$, and $\pi_i \pih_i z$ is ill-defined.
    \item $B_Z, S_Z$ are the unit ball and sphere of $Z$ respectively.
    \item $E \subseteq B_Z$ is the set of extreme points in $B_Z$. This can be shown to be those $z \in S_Z$ for which $\forall i \in [n], z_i \in S_i$.
    \item $\ihom \colon B_Z \to B_Z$ is a non-contractive injection from $B_Z$ to itself.
    \item $\lip \colon B_Z \to B_Z$ is a 1-Lipschitz bijection, such that $\lip = \ihom^{-1}$ if $\ihom$ is invertible.
    \item $\sigma \colon [n] \to [n]$ is a permutation of $[n]$.
    \item $g_i \colon S_i \to S_{\sigma(i)}$ is a non-contractive injection satisfying $\pi_i x \in S_i \Rightarrow \ihom(x)_{\sigma(i)} = \ihom_i (\pi_i x)$. Proving that such functions exist is one of the central results of this article.
    \item $\lip_i \colon S_{\sigma(i)} \to S_i$ is the inverse of $\ihom_i$, whenever the inverse exists.
\end{itemize}

Many of our results require no geometric considerations, and thus can be carried out on an analogous graph structure:

\begin{itemize}
    \item $B_i$ is a disconnected graph with maximal vertex degree exactly 1. This means every connected component of $B_i$ consists of either a single vertex or two vertices joined by an edge.
    \item $S_i \leq B_i$ is the subgraph of $B_i$ consisting only of vertices with a neighbor.
    \item $B_*$ is the co-normal product of the graphs $B_i$, while $B_\square$ is the Cartesian product of the same. $B_\square$ is a subgraph of $B_*$. Their common set of vertices is denoted $B$.
    \item $E$ is the set of those vertices $e \in B$ for which $\forall i \in [n], e_i \in S_i$. $E_* \leq B_*$ and $E_\square \leq B_\square$ are the respective induced subgraphs.
    \item $S$ is the set of those vertices $s \in B$ for which $\exists i \in [n], s_i \in S_i$. $S_* \leq B_*$ and $S_\square \leq B_\square$ are the respective induced subgraphs.
    \item The adjacency of vertices $u,v \in B_*$ is denoted $u \sim v$, and adjacency in $B_\square$ is denoted $u \sim' v$.
    \item The adjacency of vertices $u,v \in B_i$ is denoted either $u \sim v$ or $u \sim' v$, since the two product structures agree on $B_i$.
    \item $\ihom  \colon  B_* \to B_*$ is an injective graph homomorphism.
    \item $\lip  \colon  B_* \to B_*$ is the set-theoretic inverse of $\ihom$ whenever $\ihom$ is invertible.
    \item $\ihom_i  \colon  S_i \to S_{\sigma(i)}$ is a local isomorphism satisfying $\pi_i x \in S_i \Rightarrow \ihom(x)_{\sigma(i)} = \ihom_i (\pi_i x)$.
    \item $\lip_i  \colon  S_{\sigma(i)} \to S_i$ is the set-theoretic inverse of $\ihom_i$, whenever this exists.
\end{itemize}

The analogies in notation between the Banach space structure and the graph structure are motivated by these identifications:

\begin{itemize}
    \item Given points $a,b \in B_i$ with $B_i$ the unit ball of $X_i$, we may construct the graph $B_i$ by setting $a \sim b$ iff $\norm{a-b} = 2$. This will be a disconnected graph with maximal vertex degree exactly 1.
    \item $B_*$ is the graph constructed from $B_Z$ with precisely the same condition for adjacency, while $B_square$ has the adjacency condition $u \sim v \Leftrightarrow \exists i \in [n], (\norm{u_i - v_i} = 2) \land (\pih_i u = \pih_i v)$.
    \item Each subgraph $S_i$ of $B_i$ is generated by the unit sphere of $B_i$. Analogously, $S,E$ are the vertex sets corresponding to the unit sphere $S_Z$, and the extreme points of $B_Z$, respectively.
\end{itemize}

The remaining analogies are left for the reader to verify.

Throughout \autoref{sec:main}, every theorem and lemma shall have an annotation indicating whether it requires the Banach space structure. If the graph-theoretic structure is sufficient, there will be no annotation.

\subsection{Graph-theoretic results}

We would like to prove the following theorem:

\begin{theorem} \label{thm:factors}
Let $\ihom  \colon  \bcn \to \bcn$ be an injective homomorphism. Then there exists a permutation $\sigma \colon  [n]\to[n]$ and a family of local isomorphisms $g_i  \colon  S_i \to S_{\sigma(i)}$ such that for all $x \in \bcn$ and all $i \in [n]$, we have $x_i \in S_i \Rightarrow \pi_{\sigma(i)}\ihom(x) = \ihom_i (\pi_i x)$.\end{theorem}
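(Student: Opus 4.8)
\emph{Proof idea.} The plan is to reduce the statement to the behaviour of $\ihom$ on the set $E$ of extreme points. We first show $\ihom(E)\subseteq E$ via a maximum-clique argument. Since each $B_i$ is a disjoint union of edges and isolated vertices, $\omega(B_i)=2$, and for the co-normal (disjunction) product one has $\omega(G*H)=\omega(G)\cdot\omega(H)$: this follows from a fibre argument, for in a maximum clique $C$ of $G*H$ the vertices sharing a fixed $G$-coordinate must form a clique of $H$, and analysing this shows $\pi_G(C)$ is a maximum clique of $G$ each of whose vertices carries a full $\omega(H)$-clique of $H$ as its fibre. By associativity $\omega(\bcn)=\prod_{i\in[n]}\omega(B_i)=2^{n}$, and the same fibre analysis shows that in a maximum clique $C$ of $\bcn$ every projection $\pi_i(C)$ is a union of edges of $B_i$, so $\pi_i(C)\subseteq S_i$ and hence $C\subseteq E$. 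Conversely each $e\in E$ lies in the $2^{n}$-clique $Q(e)=\prod_{i\in[n]}\{e_i,e_i'\}$, its ``cube'' ($e_i'$ being the unique $S_i$-neighbour of $e_i$), so $E$ is exactly the union of the maximum cliques of $\bcn$; as an injective homomorphism sends a $2^{n}$-clique onto a $2^{n}$-clique, $\ihom(E)\subseteq E$.

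Next we locate the Cartesian structure hidden inside the co-normal one. The cubes partition $E$ --- they are the connected components of $E_\square$ --- and for $e\in E$ and $i\in[n]$ the pair $\{e,\operatorname{flip}_i e\}$, where $\operatorname{flip}_i$ swaps the $i$-th coordinate with its $S_i$-partner, is the edge of $E_\square$ through $e$ in direction $i$. The crucial claim is that $\ihom$ respects this skeleton: it maps each cube onto a cube, and maps direction-$i$ edges to direction-$\sigma(i)$ edges for a single function $\sigma\colon[n]\to[n]$ independent of $e$. Granting this, $\sigma$ is injective --- were $\sigma(i)=\sigma(j)$ with $i\ne j$, then flipping coordinate $i$ and then coordinate $j$ of $e$ would alter $\ihom(e)$ by flipping its $\sigma(i)$-th coordinate twice, i.e.\ not at all, against injectivity of $\ihom$ --- so $\sigma$ is a permutation of $[n]$.

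We then construct the maps $g_i$ and prove the factorization. Fix $i$. Within one cube, the previous step makes $\pi_{\sigma(i)}\ihom(\cdot)$ a function of the $i$-th coordinate alone, with value $g_i(c)$ when that coordinate equals $c$ and $g_i(c)'$ when it equals $c'$. For consistency across cubes, join any two cubes having the same coordinate-$i$ edge by a chain of cubes, consecutive ones differing in a single coordinate $\ne i$; the induced subgraph of $\bcn$ on two such cubes is a complete graph $K_{2^{n+1}}$ with a union of copies of $K_{2,2}$ deleted, and pushing this rigid pattern through $\ihom$ (applying the previous step to the image cubes) shows the $\sigma(i)$-th coordinate propagates unchanged along the chain. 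This yields a well-defined local isomorphism $g_i\colon S_i\to S_{\sigma(i)}$ --- it carries the edge $\{c,c'\}$ of $S_i$ to $\{g_i(c),g_i(c)'\}$, which is exactly the direction statement above --- with $\pi_{\sigma(i)}\ihom(e)=g_i(\pi_i e)$ for every $e\in E$. Finally one passes from $E$ to all of $\bcn$: for $x$ with $x_i\in S_i$, complete $x$ to an extreme point $\hat x$ by replacing each non-sphere coordinate $x_j$, $j\ne i$, by an arbitrary element of $S_j$; since adjacency in $\bcn$ depends only on sphere-coordinates, the vertices $x,\operatorname{flip}_i x,\hat x,\operatorname{flip}_i\hat x$ lie on a $4$-cycle of $\bcn$, and tracing the $\sigma(i)$-th coordinate around the image of this cycle (over several completions $\hat x$ if necessary) forces $\pi_{\sigma(i)}\ihom(x)=g_i(\pi_i x)$, the assertion of the theorem.

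The genuine difficulty is the crucial claim in the second step: upgrading the hypothesis that $\ihom$ merely preserves co-normal (disjunction) edges to the conclusion that it preserves the far more rigid Cartesian cube structure, so that $\ihom$ cannot send a cube to a ``twisted'' maximum clique --- one whose fibres over the remaining coordinates, though still edges, are not all equal --- instead of to a genuine cube. The obstruction is intrinsic: a homomorphism need not reflect non-edges, so structure cannot simply be transported back through $\ihom$; the argument must play the injectivity of $\ihom$ off against the way neighbouring cubes interlock inside $\bcn$, most plausibly via an induction on $n$. By contrast the clique bookkeeping of the first step, and the connectivity, propagation and completion arguments of the third, are comparatively routine once the second step is secured.
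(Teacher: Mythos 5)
Your outline matches the paper's strategy (cubes as the connected components of $E_\square$, showing $\ihom$ respects them, factoring over the cube structure, then extending off $E$), but the step you yourself flag as ``the genuine difficulty'' --- that $\ihom$ carries each cube onto a cube and direction-$i$ edges to direction-$\sigma(i)$ edges, rather than onto a twisted maximum clique --- is exactly the technical core of the paper, and your proposal does not supply an argument for it. The paper resolves it in two stages: first a uniqueness lemma (proved by induction on $n$) stating that a clique of $2^n-1$ vertices in $\bcn$ extends to a clique of $2^n$ vertices in at most one way, whose proof hinges on splitting a maximum clique into two halves $C\cupdot D$ with $\pi_0 C$, $\pi_0 D$ edgeless and $\pi_0 D = N(\pi_0 C)$; and second an argument that for a Cartesian edge $x_a\sim' x_b$ of a cube, the images $y_a,y_b$ cannot differ in two coordinates --- this is where injectivity enters, via an auxiliary vertex $v\notin E$ (agreeing with $x_a,x_b$ off the flipped coordinate but placed off the sphere in that coordinate) whose image is adjacent to all of the image clique except possibly $y_a,y_b$, forcing $\ihom(v)=\ihom(x_b)$ and hence $v=x_b$, a contradiction. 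Without something of this kind your claimed $\sigma$ and the maps $g_i$ are not yet defined, so the rest of the construction has nothing to stand on.

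Two secondary points. First, the identity $\omega(G*H)=\omega(G)\,\omega(H)$ for the co-normal product is false in general ($C_5 * C_5$ contains a $5$-clique), and even for these particular factors a maximum clique need not have the fibre structure you describe: with $B_0$ containing edges $\{a,a'\},\{b,b'\}$ and $B_1$ the edge $\{c,c'\}$, the set $\{(a,c),(a',c),(b,c'),(b',c')\}$ is a maximum clique whose fibre over $a$ is a single vertex and whose $\pi_0$-projection is not a clique. The correct statements ($\omega(\bcn)=2^n$ and every maximum clique lies in $E$) do hold, but in the paper they fall out of the clique-extension lemma rather than a product formula. Second, your final extension from $E$ to points $x$ with only some coordinates on the sphere cannot be carried by ``tracing the $\sigma(i)$-th coordinate around a $4$-cycle'': a homomorphism does not reflect non-edges, so the image of that $4$-cycle may be a $4$-clique and the trace is not determined. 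The paper instead uses all $2^{n-1}$ vertices of a completing cube whose $i$-th coordinate is flipped relative to $x$, and derives a contradiction with the maximum clique size of $\pih_{\sigma(i)}\bcn$ if $\pi_{\sigma(i)}\ihom(x)$ were wrong --- again leaning on the clique-extension lemma. So the one inductive clique lemma you are missing is doing the work in both the second and the third steps of your plan.
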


\begin{lemma} \label{lem:clique-ext}

  A clique of $2^n-1$ points in $\bcn$ has at most one extension
to a clique of $2^n$ points.
\end{lemma}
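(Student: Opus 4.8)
The plan is to deduce the lemma from two facts about cliques in $\bcn$, proved together by induction on $n$: \textbf{(I)} every clique in $\bcn$ has at most $2^n$ vertices, and \textbf{(II)} if a clique in $\bcn$ has exactly $2^n$ vertices, then it is contained in $E$, i.e.\ every coordinate of every one of its vertices has a neighbour in the corresponding $B_i$. The tool shared by all three arguments is a ``halving along a coordinate'' construction. Given a clique --- or, in the last application, a clique with a single edge removed --- $L \subseteq \bcn$ and a coordinate $c \in [n]$, call $u, u' \in L$ \emph{$c$-equivalent} when $u_c$ and $u'_c$ lie in the same connected component of $B_c$. Because $B_c$ has maximum vertex degree at most $1$, this is an equivalence relation, and each class is either a \emph{point-class}, with all its $c$-coordinates equal to one fixed isolated vertex of $B_c$, or an \emph{edge-class}, with all its $c$-coordinates among the two endpoints of one fixed edge of $B_c$. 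Choosing one endpoint from each edge-class and retaining every point-class whole produces a set $D \subseteq L$; the opposite endpoint choices produce $D'$; then $D \cup D' = L$, while $D \cap D'$ is exactly the union of the point-classes.

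The crucial point is that if every two distinct vertices of $D$ are adjacent in $\bcn$ --- automatic when $L$ is a clique, and still true for a clique-minus-an-edge as long as $D$ omits an endpoint of the deleted edge --- then for distinct $u, u' \in D$ the adjacency cannot be witnessed \emph{only} in coordinate $c$: that would put $u, u'$ in a common edge-class with opposite $c$-coordinates, contradicting that $D$ keeps only one $c$-value per class. So it is witnessed in some coordinate $\ne c$, which means that the projection forgetting the $c$-th coordinate sends $D$ injectively onto a clique in the $(n-1)$-factor co-normal product of the graphs $B_j$, $j \ne c$. By the induction hypothesis $|D| \le 2^{n-1}$, and similarly $|D'| \le 2^{n-1}$, so $|L| \le |D| + |D'| \le 2^n$, which is \textbf{(I)}; and if $L$ is a clique with $|L| = 2^n$ then necessarily $|D| = |D'| = 2^{n-1}$ and $D \cap D' = \varnothing$, i.e.\ there are no point-classes for $c$, i.e.\ every vertex of $L$ has its $c$-th coordinate in $S_c$ --- and since $c$ was arbitrary, $L \subseteq E$, which is \textbf{(II)}. (The base case $n = 1$ is immediate, since $\bcn = B_1$ then has maximum degree at most $1$.)

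To finish, let $K$ be a clique of $2^n - 1$ vertices and suppose $v \ne w$ each extend $K$ to a clique of size $2^n$. If $v \sim w$ in $\bcn$, then $K \cup \{v, w\}$ is a clique of size $2^n + 1$, contradicting \textbf{(I)}. Otherwise $v \not\sim w$; since $v \ne w$, fix a coordinate $c$ with $v_c \ne w_c$, and note that $v_c$ and $w_c$ are then non-adjacent in $B_c$. Applying \textbf{(II)} to $K \cup \{v\}$ and to $K \cup \{w\}$ gives $K \cup \{v, w\} \subseteq E$, so $v_c, w_c \in S_c$; being distinct and non-adjacent they lie in different components of $B_c$. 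Now run the halving on $T := K \cup \{v, w\}$ at coordinate $c$: here every $c$-class is an edge-class (as $T \subseteq E$), so $D \cup D' = T$ and $D \cap D' = \varnothing$, and since $v$ and $w$ belong to different $c$-classes I can choose endpoints so that $v \in D$ and $w \in D'$. Then $D$ omits $w$ and $D'$ omits $v$, so --- $\{v, w\}$ being the only non-adjacent pair in $T$ --- the crucial point applies to both halves and gives $2^n + 1 = |T| = |D| + |D'| \le 2^{n-1} + 2^{n-1} = 2^n$, a contradiction. Hence $K$ has at most one extension.

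The step I expect to need the most care is exactly this last coordination: \textbf{(II)} is needed precisely to guarantee that, along the separating coordinate, $v$ and $w$ sit in honest two-vertex components of $B_c$, for otherwise one of them would fall into \emph{both} halves and the identity $|T| = |D| + |D'|$ would fail. A smaller but essential point is the transitivity of ``being in the same component of $B_c$'' --- which is where the hypothesis that each $B_i$ has maximum degree at most $1$ is genuinely used --- together with the check that the bound $|D| \le 2^{n-1}$ is valid for a clique with one edge removed and not merely for a clique.
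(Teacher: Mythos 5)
Your proof is correct, but it is organized quite differently from the paper's. The paper inducts on the uniqueness-of-extension statement itself: it notes that this hypothesis already forces the maximum clique size in $\bcn$ to be at most $2^n$, splits a $2^n$-clique into two halves $C \cupdot D$ of size $2^{n-1}$ by taking $C$ maximal with $\pi_0 C$ edgeless (which, for cliques inside $E$, is exactly your ``one endpoint per edge-class'' halving), and then shows that any candidate extension $q$ must satisfy $\pih_0 q = \pih_0 x_0$ by applying the induction hypothesis to the $(2^{n-1}-1)$-clique $\pih_0(C \setminus \{x_0\})$; repeating at a second coordinate pins down $q$ completely. You instead induct on two auxiliary facts --- the bound $2^n$ on clique size and the statement that maximum cliques lie in $E$ --- and then extract uniqueness by a single counting contradiction: two non-adjacent extensions would force a clique-minus-an-edge of size $2^n+1$ whose two halves along a separating coordinate each embed as cliques in an $(n-1)$-factor product. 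The shared engine is the coordinate halving and the $2^{n-1}$ bound on each half; the difference is that the paper identifies the extension pointwise via two complementary projections, whereas you never need that step. Your route has the advantage of isolating the fact that $2^n$-cliques are contained in $E$ (which the paper does not state, though it is implicit in its analysis of $\pi_0 C = N(\pi_0 D)$), at the mild cost of having to track the ``clique with one edge removed'' case carefully --- which you do correctly, since each half omits one endpoint of the missing edge. Both arguments are complete; yours is a legitimate alternative.
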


\begin{proof} 

  Induction on $n$. The case with $n=1$ is clear by inspection.

  First note that if the statement is true for a graph $\bcn$, then the maximal clique size for $\bcn$ is at most $2^n$. If there were a clique of $2^n+1$ points, then every subclique of size $2^n - 1$ would have at least two distinct extensions to a clique of size $2^n$.

  Consider an enumerated family $(x_i)_{i \in [2^n]}$ of pairwise adjacent
  vertices in $\bcn$. We want to show that any vertex $q \in \bcn$ that is
  adjacent to all vertices $x_i$ with $i > 0$ is equal to $x_0$.

  Partition the family into $C \cupdot D$ such that $C$ is any maximal
  subset of the family such that $\pi_0 C$ is an edgeless graph.

  Note two things: $\pi_0 D$ is also edgeless, and $\card{C} = \card{D} = 2^{n-1}$. From this, it follows that $D$ also satisfies the defining condition of $C$.

  Let's verify the first observation. If any element $\pi_0 x$ of $\pi_0 D$ were disconnected from $\pi_0 C$, then $C$ could be extended to $C \cup \{x\}$, so $C$ would not be maximal. This implies that every element of $\pi_0 D$ is connected to some element of $\pi_0 C$, so $\pi_0 D \subseteq N(\pi_0 C)$. Since $\pi_0 C$ is edgeless, then $N(\pi_0 C)$ is edgeless,
  thus $\pi_0 D$ is edgeless.

  Now the second observation. Since $C$ is maximal among those subsets $S$ for which $\pi_0 S$ is edgeless, and $\pi_0 D$ is edgeless, we have $\card{C} \geq \card{D}$, from which $\card{C} \geq 2^{n-1}$. Now, since $\forall c, c' \in C, c \sim c'$ while $\pi_0 c \nsim \pi_0 c'$, we must have $\pih_0 c \sim \pih_0 c'$. This means that $\pih_0 C$ is a clique of at least $\card{C} \geq 2^{n-1}$ points in $\pih_0 \bcn$ (we have that $\card{\pih_0 C} = \card{C}$ since $c \neq c' \Rightarrow c \sim c' \Rightarrow \pih_0 c \sim \pih_0 c' \Rightarrow \pih_0 c \neq \pih_0 c'$). By the induction assumption, the largest clique size in $\pih_0 \bcn$ is at most $2^{n-1}$, so $\card{C} \geq 2^{n-1} \geq \card{\pih_0 C} = \card{C}$, hence $\card{C} = 2^{n-1}$.

  Having shown that $D$ is also maximal w.r.t. C's defining condition, we have that $\pi_0 C \subseteq N(\pi_0 D)$. Since $N^2(S) \subseteq S$ for a subset $S$ of a graph of maximal degree 1, we have that $N(\pi_0 C) \subseteq \pi_0 D$, from which $\pi_0 D = N(\pi_0 C)$ and $\pi_0 C = N(\pi_0 D)$.

  Assume WLOG that $x_0 \in C$. If $\pi_0 q$ had no edge to $\pi_0 D$, then
  $\pih_0 D \cupdot \{\pih_0 q\}$ would form a clique of $2^{n-1}+1$ elements
  in $\pih_0 \bcn$, which is impossible. Thus $\pi_0 q \sim \pi_0 b$ for some $b \in D$. It follows that $\pi_0 q$ has no edge to $\pi_0 C$, from which $\pih_0 q$ has edges to all members of $\pih_0 (C - \{x_0\})$. This means $\pih_0 (C - \{x_0\})$ is a clique of $2^{n-1}-1$ points in $\pih_0 \bcn$ which can be extended by $\pih_0 q$ or by $\pih_0 x_0$. By the induction assumption, this means $\pih_0 q = \pih_0 x_0$.
  
  Running the same argument by some index $j > 0$, we also have that $\pih_j q = \pih_j x_0$. Since these two projections cover all components, we must have $q = x_0$.
\end{proof}

For any $x \in E$, define $T(x)$ as the connected component of $x$ in $\bbox$. It may be readily verified that $T(x)$ has $2^n$ members: there is a bijection between subsets $J \subseteq [n]$ and vertices $x_J \in T(x)$, such that $\forall j \in J, \pi_j x_J = \pi_j x$ and $\forall j \in [n]\setminus J, \pi_j x_J \sim \pi_j x$. $T(x)$ is thus naturally isomorphic to a pointed $n$‑hypercube graph.

From here on, let $x_0 \in E$. Let $\xfam = T(x_0)$ and define $y_i = \ihom(x_i)$.

\begin{lemma} \label{one-component}

  $\ihom$ is an $E_\square$-homomorphism.
  \end{lemma}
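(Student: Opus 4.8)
The plan is to unwind the phrase ``$E_\square$-homomorphism'' as the two assertions $\ihom(E)\subseteq E$ and ``$e\sim' e'$ in $E$ implies $\ihom(e)\sim'\ihom(e')$'', and to prove both by working with the single cube $T(x_0)$, which is legitimate: every $\sim'$-edge of $E_\square$ lies inside $T(e)=T(e')$ for its endpoints, and $x_0\in E$ is arbitrary. So I would fix $x_0$, write $\xfam=T(x_0)$, $y_i=\ihom(x_i)$, and prove the two assertions for this cube.

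First I would observe that $T(x_0)$ is a clique of $2^n$ points in $\bcn$: for $x_J,x_K\in T(x_0)$ with $J\neq K$ and any $j\in J\triangle K$ we get $\pi_j x_J\sim\pi_j x_K$, hence $x_J\sim x_K$; by the maximal-clique bound noted inside the proof of \autoref{lem:clique-ext}, this size is optimal, so $\yfam$, being the injective-homomorphic image of a maximum clique, is again a clique of $2^n$ points. I would then show that \emph{every} clique $Q$ of $2^n$ points lies in $E$. Fixing a coordinate $0$ and splitting $Q=C\cupdot D$ with $C$ a maximal subset subject to $\pi_0 C$ being edgeless --- exactly the splitting used in \autoref{lem:clique-ext} --- one gets $\card{C}=\card{D}=2^{n-1}$, that $D$ is maximal with the same property, and $\pi_0 D=N(\pi_0 C)$, $\pi_0 C=N(\pi_0 D)$; since $N(N(S))\subseteq S$ for any $S$ in a graph of maximum degree $\le 1$, one checks $\pi_0 C\cap\pi_0 D=\emptyset$, whence $\pi_0$ matches $C$ bijectively onto its neighbours in $D$, so every vertex of $\pi_0 Q$ has a neighbour and thus lies in $S_0$. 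Running this over all coordinates gives $Q\subseteq E$; applied to $Q=\yfam$ (and since each $e\in E$ lies in $T(e)$) this yields $\ihom(E)\subseteq E$.

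For the $\bbox$-edges, take $x_i\sim' x_j$ in $T(x_0)$, differing only in the $k$-th coordinate, so $\pih_k x_i=\pih_k x_j$ and $\pi_k x_i\sim\pi_k x_j$. Here the disconnectedness of $B_k$ is used: pick $w\in\bcn$ with $\pih_k w=\pih_k x_i$ and $\pi_k w$ in a $B_k$-component different from that of $\pi_k x_i$. Then $w\notin T(x_0)$, $w\not\sim x_i$ and $w\not\sim x_j$ (both agree with $w$ off coordinate $k$ and lie in a different $B_k$-component there), while $w\sim x_m$ for every other $x_m\in T(x_0)$ (such $x_m$ differs from $x_i$ in some coordinate $\ell\neq k$, where $\pi_\ell w=\pi_\ell x_i\sim\pi_\ell x_m$). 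Hence $\ihom(w)\notin\yfam$ and $\ihom(w)$ is adjacent to every vertex of $\yfam\setminus\{y_i,y_j\}$. If also $\ihom(w)\sim y_i$, then the $(2^n-1)$-clique $\yfam\setminus\{y_j\}$ would have the two distinct extensions $y_j$ and $\ihom(w)$ to a clique of $2^n$ points, contradicting \autoref{lem:clique-ext}; so $\ihom(w)\not\sim y_i$, and symmetrically $\ihom(w)\not\sim y_j$. Thus $\ihom(w)$ is adjacent to $\yfam$ in exactly $\yfam\setminus\{y_i,y_j\}$.

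It then remains to prove, and apply with $Q=\yfam$ and $v=\ihom(w)$, the purely combinatorial fact: \emph{if $Q\subseteq E$ is a clique of $2^n$ points and some $v\in\bcn\setminus Q$ is adjacent to $Q$ in exactly $Q\setminus\{q_1,q_2\}$, then $q_1$ and $q_2$ differ in exactly one coordinate.} (When $v\in E$, a vertex $q\in Q$ is a non-neighbour of $v$ precisely when the coordinatewise antipode $\bar v$ disagrees with $q$ in every coordinate, so the hypothesis says exactly two members of $Q$ are ``totally apart'' from $\bar v$; analysing the coordinate fibres of $Q$ through the same $C\cupdot D$ split, by induction on $n$, forces those two to be a matched pair in a single coordinate. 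If $v\notin E$, one discards the coordinates where $\pi_\ell v$ is isolated, which only relaxes the non-adjacency condition.) This delivers $y_i\sim' y_j$, completing the proof. I expect this last fact to be the main obstacle: it is the one ingredient not immediately supplied by \autoref{lem:clique-ext}, and its induction must cope with the fact that maximum cliques of $\bcn$ need not be cubes $T(e)$; everything else is bookkeeping around \autoref{lem:clique-ext} together with the single use of disconnectedness in choosing $w$.
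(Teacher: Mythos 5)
Your setup is sound and in places tidier than the paper's: the observation that every clique of $2^n$ vertices in $\bcn$ is contained in $E$ (via the $C\cupdot D$ split, $\pi_0 C=N(\pi_0 D)$ and $\pi_0 D=N(\pi_0 C)$) correctly supplies $\ihom(E)\subseteq E$, which the paper leaves implicit; and your auxiliary vertex $w$ is exactly the paper's $v$ (since $B_k$ has maximal degree $1$, the component of $\pi_k x_i$ is just $\{\pi_k x_i,\pi_k x_j\}$, so ``different component'' is the same as ``different from both''), with the clique-extension argument correctly pinning down that $\ihom(w)$ is adjacent to precisely $\yfam\setminus\{y_i,y_j\}$.

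The problem is that your ``purely combinatorial fact'' is not a side lemma --- it is the entire content of the paper's proof, and you have not proved it. Your sketch (antipodes, ``induction on $n$ through the $C\cupdot D$ split'') does not identify the mechanism: an induction that peels off a coordinate where $q_1$ and $q_2$ agree (splitting there so both land in the same part $C$, and passing to $\pih_\ell C$ and $\pih_\ell v$) only reduces the statement to the case where $q_1$ and $q_2$ differ in \emph{every} remaining coordinate, and that residual case is precisely the hard one; the antipode reformulation does not resolve it. The fact is true, and the argument that proves it in your abstract formulation is the paper's: assuming $q_1,q_2$ differ at two coordinates $i\neq j$, build $C_i\cupdot D_i$ with $\pi_i C_i$ maximal edgeless and $q_1\in C_i$, $q_2\in D_i$; if $\pi_i v$ had an edge to $\pi_i C_i$ then $\pi_i v\in N(\pi_i C_i)=\pi_i D_i$, so $\pih_i v$ extends the $(2^{n-1}-1)$-clique $\pih_i(D_i\setminus\{q_2\})$, and unique extension (\autoref{lem:clique-ext}) forces $\pih_i v=\pih_i q_2$, after which repeating at $j$ forces $v=q_2\in Q$, a contradiction; hence $\pi_i v$ has no edge to $\pi_i C_i$, and symmetrically none to $\pi_i D_i$, so $\pih_i v$ extends both $\pih_i(C_i\setminus\{q_1\})$ and $\pih_i(D_i\setminus\{q_2\})$, whence $\pih_i q_1=\pih_i v=\pih_i q_2$, contradicting the difference at $j$. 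Until you supply this (or an equivalent) argument, the proof has a genuine gap exactly at its load-bearing step.
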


\begin{proof} 

  Let us have $x_a, x_b \in E$ such that $x_a \sim' x_b$. There exists some $x_0 \in E$ for which $x_a, x_b \in T(x_0)$ --- we may choose $x_0 \coloneqq x_a$.

  Since $\ihom$ is a $\bcn$-homomorphism and $x_a \sim x_b$, we have $y_a \sim y_b$, so $y_a, y_b$ are adjacent in at least one component. Note also that $\yfam$ is a clique in $\bcn$.

  We first show that $y_a, y_b$ differ in exactly one component.
  Suppose for the sake of contradiction that $y_a, y_b$ differ in at least two components, and let the first two of these have indices $i, j$.

  Let $C_i \cupdot D_i$ be a partition of $\yfam$ such that $\pi_i C_i$ is maximal and edgeless, and $y_a \in C_i, y_b \in D_i$. To prove that such a partition exists, it is sufficient to find any $y_c$ such that $\pi_i y_c \sim \pi_i y_b$, start with $\{y_a, y_c\} \subseteq C_i$ and extend $C_i$ to maximality (noting that $\pi_i y_a \neq \pi_i y_b$ implies $\pi_i y_a \nsim \pi_i y_c$). Such a $y_c$ exists, since any partition of $\yfam$ into two maximal $\pi_i$‑edgeless sets consists of two nonempty sets, one of which contains $y_b$, and any member of the other component can be $y_c$.
  Also construct an analogous partition $C_j \cupdot D_j$ for $\pi_j$.

  Let $k$ be the index for which $\pi_k x_a \sim \pi_k x_b$. By the definition of $T$, this is the unique index at which $\pi_k x_a \neq \pi_k x_b$, so $\pih_k x_a = \pih_k x_b$. Fix $v \in B$ such
  that $\pih_k v = \pih_k x_a, \pih_k x_b$, and $\pi_k v \neq \pi_k x_a, \pi_k x_b$. Note that $\ihom(v)$ has $\bcn$-edges to all of $\yfam$ except possibly for $y_a, y_b$.

  Suppose that $\pi_i \ihom(v)$ has an edge to $\pi_i C_i$. This implies $\pi_i \ihom(v) \in N(\pi_i C_i) = \pi_i D_i$,
  from which $\pih_i \ihom(v)$ forms a $\bcn$-clique with $\pih_i (D_i - \{y_b\})$. By \autoref{lem:clique-ext} this implies that $\pih_i \ihom(v) = \pih_i y_b$, from which $\pi_j \ihom(v) = \pi_j y_b$. From this, we have that $\pih_j \ihom(v)$ forms a $\bcn$-clique with $\pih_j (D_j - \{y_b\})$, which implies by \autoref{lem:clique-ext} that $\pih_j \ihom(v) = \pih_j y_b$. Since $\pih_i \ihom(v) = \pih_i y_b$ and $\pih_j \ihom(v) = \pih_j y_b$, we have $\ihom(v) = y_b = \ihom(x_b)$, from which $v = x_b$, which contradicts our choice of $v$.

  Consequently, $\pi_i \ihom(v)$ has no edge to $\pi_i C_i$, and by symmetrical argument it has no edge to $\pi_i D_i$ either. From these, we have that $\pih_i \ihom(v)$ forms a $\bcn$-clique with $\pih_i (C_i - \{y_a\})$ and with $\pih_i (D_i - \{y_b\})$, which implies by \autoref{lem:clique-ext} that $\pih_i y_a = \pih_i \ihom(v) = \pih_i y_b$, so $\pi_j y_a = \pi_j y_b$, contradicting our choice of $j$ and thus our claim that $y_a, y_b$ differ in at least two components.

  Since $y_a$ and $y_b$ are adjacent in at least one component and differ in at most one component, these bounds must be saturated and exactly one component accounts for both, which is some $m$ at which $\pi_{m} y_a \sim \pi_{m} y_b$ and $\pih_{m} y_a = \pih_{m} y_b$ --- consequently $y_a \sim' y_b$.
\end{proof}

\begin{lemma} 

  There is a permutation $\sigma \colon  [n]\to[n]$ and a family of local isomorphisms $\ihom_i  \colon  S_i \to S_{\sigma(i)}$ such that for $e \in E$, we have $\pi_{\sigma(i)} \ihom(e) = \ihom_i (\pi_i e)$.
\end{lemma}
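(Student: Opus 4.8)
The plan is to read off, on each connected component $T(x_0)$ of $\bbox$ separately, a description of $\ihom$ in terms of a coordinate permutation together with componentwise local isomorphisms, and then to show that these local descriptions are forced to agree, so that they patch to a single global one. Throughout, write $x\approx x'$ for $x,x'\in E$ when $\pi_i x$ and $\pi_i x'$ lie in a common edge of $S_i$ for every $i\in[n]$; this is an equivalence relation on $E$ whose classes are exactly the sets $T(x)$.

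\emph{Local step.} Fix $x_0\in E$. Since any two distinct members of $T(x_0)$ differ in some coordinate and are therefore $B_i$-adjacent there, $T(x_0)$ is a $2^n$-clique in $\bcn$, so $K\coloneqq\ihom(T(x_0))$ is again a $2^n$-clique, and by \autoref{one-component} $\ihom$ carries each of the $n2^{n-1}$ edges of the cube $T(x_0)$ to a $\bbox$-edge of $K$, these images being pairwise distinct since $\ihom$ is injective. I would first show $K=T(z)$ for some $z\in E$: for each coordinate $d$ the $\bbox$-edges of direction $d$ inside any $2^n$-clique form a matching on its vertices (vertices of $B_d$ have unique neighbours), so such a clique has at most $n2^{n-1}$ cube-edges, and equality forces every coordinate's direction-edges to be a perfect matching; then every vertex of the clique lies in $E$ and flipping coordinates stays inside the clique, so the clique is $T(z)$ for any of its vertices $z$. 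As $K$ has at least $n2^{n-1}$ cube-edges, $K=T(z)$; in particular $\ihom(E)\subseteq E$. Next I would show that $\ihom$ sends equally-directed edges of $T(x_0)$ to equally-directed $\bbox$-edges: a $4$-cycle of $\bbox$ is always a ``grid'' cycle (in each coordinate the induced closed walk has even length $0$, $2$ or $4$; length $4$ is impossible because $B_i$ has maximal degree $1$; so exactly two coordinates occur, each twice and on opposite edges), and $\ihom$ maps every $4$-cycle of $T(x_0)$ to such a grid cycle. Since any two direction-$d$ edges of $T(x_0)$ are linked by a chain of ``opposite edges of one $4$-cycle'' relations, transitivity yields a well-defined assignment $d\mapsto\sigma_\mathcal{T}(d)$ of image directions, and it is injective because a $4$-cycle through two coordinates has image a $4$-cycle (through two distinct coordinates). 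Finally, for $x,x'\in T(x_0)$ with $\pi_d x=\pi_d x'$, joining $x$ to $x'$ inside the cube by edges of directions $\neq d$ shows $\pi_{\sigma_\mathcal{T}(d)}\ihom(x)$ depends only on $\pi_d x$, defining a local isomorphism $\ihom_{\mathcal{T},d}$ from the edge of $S_d$ met by $T(x_0)$ onto an edge of $S_{\sigma_\mathcal{T}(d)}$ with $\pi_{\sigma_\mathcal{T}(d)}\ihom(x)=\ihom_{\mathcal{T},d}(\pi_d x)$ on $T(x_0)$. All of $\sigma_\mathcal{T}$ and $\ihom_{\mathcal{T},\cdot}$ depend only on the class $\mathcal{T}=T(x_0)$.

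\emph{Gluing step (the main obstacle).} I would prove: if classes $\mathcal{T},\mathcal{T}'$ differ in exactly one coordinate $d$ (they meet the same edge of $S_i$ for $i\neq d$ but distinct edges of $S_d$), then $\sigma_\mathcal{T}=\sigma_{\mathcal{T}'}$ and $\ihom_{\mathcal{T},i}=\ihom_{\mathcal{T}',i}$ for all $i\neq d$. Choose $e\in\mathcal{T}$, $e'\in\mathcal{T}'$ with $\pi_i e=\pi_i e'$ for $i\neq d$ and set $V=T(e)\cup T(e')$. In $\bcn$, $V$ is the complete graph on its $2^{n+1}$ vertices with only the ``partner'' edges removed: $x\in T(e)$ and $x'\in T(e')$ are non-adjacent exactly when $\pi_i x=\pi_i x'$ for all $i\neq d$, and there are just two such $x'$ for each $x$. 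Hence in $\ihom(V)=T(w)\cup T(w')$ (by the local step) each vertex of $T(w)$ has at most two non-neighbours in $T(w')$; but if $T(w)$ and $T(w')$ differ in $r$ coordinates then, straight from the definition of $T$-classes, each vertex of $T(w)$ has exactly $2^r$ non-neighbours in $T(w')$, and $r\geq1$ (else $\ihom$ would fail to be injective), so $r=1$, say the differing coordinate is $m$. Identifying the two partners of $e$ in $T(e')$ (which are $e'$ and its $d$-flip, differing in coordinate $d$) with the two non-neighbours of $\ihom(e)$ in $T(w')$ (which differ in coordinate $m$) and invoking the local step for $\mathcal{T}'$ gives $m=\sigma_{\mathcal{T}'}(d)$; symmetrically $m=\sigma_\mathcal{T}(d)$. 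Repeating the same bookkeeping with $e,e'$ replaced by their $i$-flips ($i\neq d$), and using that $\sigma_\mathcal{T}(i)\neq m$ by injectivity, forces the $i$-flips' images to agree off coordinate $m$, which yields $\sigma_\mathcal{T}(i)=\sigma_{\mathcal{T}'}(i)$ and then, comparing values at both endpoints, $\ihom_{\mathcal{T},i}=\ihom_{\mathcal{T}',i}$. I expect the double-counting of non-neighbours and the care needed to tie the single coordinate $m$ to the image of the $d$-direction to be the most delicate part of the whole argument.

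\emph{Globalization.} The $\approx$-classes are indexed by $\prod_{i\in[n]}\{\text{edges of }S_i\}$, and ``differing in exactly one coordinate'' makes this index set into a connected (Hamming-type) graph, so the gluing step forces $\sigma_\mathcal{T}$ to be one and the same permutation $\sigma$ for all $\mathcal{T}$. Likewise, any two classes meeting a fixed edge of $S_i$ are joined by a path that never changes coordinate $i$, along which $\ihom_{\cdot,i}$ is constant by the gluing step, so the maps $\ihom_{\mathcal{T},i}$ patch to a well-defined local isomorphism $\ihom_i\colon S_i\to S_{\sigma(i)}$; and for any $e\in E$, applying the local step to $\mathcal{T}=T(e)$ gives $\pi_{\sigma(i)}\ihom(e)=\ihom_{T(e),i}(\pi_i e)=\ihom_i(\pi_i e)$, as required. (If some $S_i$ is empty then $E=\emptyset$ and one may take $\sigma=\mathrm{id}$ and $\ihom_i=\mathrm{id}_{S_i}$; for $n=1$ the gluing step is vacuous and $\sigma$ is forced to be the identity.)
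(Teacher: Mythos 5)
Your proposal is correct, but it takes a more self-contained route than the paper at the two places where real content is needed. The paper's proof gets the local structure by observing that $\ihom$ restricts to an isomorphism $T(x_0)\to T(y_0)$ (connectivity plus equal edge counts) and then \emph{cites} a product-structure theorem \autocite[Theorem 6.8]{handbook} to conclude that this isomorphism is componentwise up to a permutation of factors; the cross-component compatibility is then dispatched in a single sentence (``these isomorphisms must glue compatibly across all of $E$''). You instead prove the local structure by hand: the edge count $n2^{n-1}$ forces $\ihom(T(x_0))$ to be a full hypercube component (this also yields $\ihom(E)\subseteq E$), and the $4$-cycle analysis in $\bbox$ (exactly two directions, on opposite edge pairs, since each $B_i$ has maximal degree $1$) gives a well-defined injective direction map $\sigma_{\mathcal T}$ and componentwise maps on each class --- essentially re-deriving the needed special case of the cited theorem. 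More significantly, you supply an actual argument for the gluing step, which the paper only asserts: the non-neighbour count between two adjacent classes in $\bcn$ (at most $2$ by injectivity and preservation of adjacency, exactly $2^r$ by the structure of $T$-classes) pins down $r=1$, identifies the differing image coordinate with $\sigma_{\mathcal T}(d)=\sigma_{\mathcal T'}(d)$, and then injectivity of $\sigma_{\mathcal T}$ settles the remaining coordinates and the equality of the local maps on shared edges; your compressed bookkeeping there fills in correctly. The trade-off: the paper's argument is shorter and leans on known Cartesian-product factorization theory, while yours is longer but elementary, needs only \autoref{one-component}, and makes explicit the patching across components of $E_\square$ (including the degenerate cases of empty $S_i$ and $n=1$), which is arguably the least documented point in the paper's version.
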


\begin{proof} 

First, note that $\yfam$ is a subgraph of $T(y_0)$, since $\yfam$ is connected (on account of being a homomorphic image of $T(x_0)$), and $T(y_0)$ is the connected component of $y_0$ in $\bbox$. Consequently, $\ihom$ is a homomorphism from $T(x_0)$ to $T(y_0)$.

Because $\ihom$ is an injective homomorphism between the finite graphs $T(x_0)$
and $T(y_0)$, and the two graphs have an equal number of edges, $\ihom$ must be an isomorphism from $T(x_0)$ to $T(y_0)$.

That this isomorphism arises from a componentwise isomorphism (up to permutation of components) follows from \autocite[Theorem 6.8]{handbook}.

Since these isomorphisms must glue compatibly across all of $E$, we have that there exists a permutation $\sigma  \colon  [n] \to [n]$ and a family of local isomorphisms $\ihom_i  \colon  S_i \to S_{\sigma(i)}$ such that for all $x \in E$, $\pi_{\sigma(i)} \ihom(x) = \ihom_i (\pi_i x)$.
\end{proof}

We would like to extend this slightly to the case of vertices outside of $E$ to finish our proof of  \autoref{thm:factors}.

\begin{lemma} \label{lem:interior}

  Let $q \in B$ and $i \in [n]$ be such that $\pi_i q = \pi_i x_0$. Then $\pi_{\sigma(i)} \ihom(q) = \pi_{\sigma(i)} \ihom(x_0) = \ihom_i (\pi_i x_0)$.
  \end{lemma}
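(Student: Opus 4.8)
The plan is to compare $\ihom(q)$ against the clique $\yfam$ and run a clique-counting argument on the $(n-1)$-fold co-normal product $\pih_{\sigma(i)}\bcn$ obtained by deleting the $\sigma(i)$-th factor. It is convenient to index $T(x_0)$ by subsets: write $x_J$ (for $J \subseteq [n]$) for the vertex of $T(x_0)$ with $\pi_j x_J = \pi_j x_0$ when $j \in J$ and $\pi_j x_J \sim \pi_j x_0$ when $j \notin J$ (so $x_{[n]} = x_0$), and set $y_J := \ihom(x_J)$. I would also first note that the case $n = 1$ is trivial, since then $\pi_i q = \pi_i x_0$ forces $q = x_0$.

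Set $a := \pi_i x_0 \in S_i$, let $a'$ be its unique $B_i$-neighbour, put $p := \ihom_i(a)$ (so $p = \pi_{\sigma(i)} y_0$ by the previous lemma), and let $p'$ be the unique $B_{\sigma(i)}$-neighbour of $p$; here $a' \in S_i$ and $p' \in S_{\sigma(i)}$. I would then record two easy facts. First, for every $J$ with $i \notin J$ we have $\pi_i x_J = a'$ (the unique neighbour of $a$), so $\pi_i q = a \sim a' = \pi_i x_J$, giving $q \sim x_J$ in $\bcn$ (witnessed in coordinate $i$) and hence $\ihom(q) \sim y_J$. Second, applying the previous lemma to $x_J \in E$ and using that $\ihom_i$ is a homomorphism into a loopless graph, $\pi_{\sigma(i)} y_J = \ihom_i(\pi_i x_J) = \ihom_i(a') = p' \neq p$. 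Also $\ihom(q) \neq y_J$, since $\pi_i q = a \neq a' = \pi_i x_J$.

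Now suppose for contradiction that $\pi_{\sigma(i)}\ihom(q) \neq p$, and fix $J$ with $i \notin J$. The edge $\ihom(q) \sim y_J$ is witnessed in some coordinate; it cannot be coordinate $\sigma(i)$, since otherwise $\pi_{\sigma(i)}\ihom(q) \sim \pi_{\sigma(i)} y_J = p'$ would force $\pi_{\sigma(i)}\ihom(q) = p$ (the unique neighbour of $p'$). So it is witnessed off coordinate $\sigma(i)$, i.e.\ $\pih_{\sigma(i)}\ihom(q) \sim \pih_{\sigma(i)} y_J$ in $\pih_{\sigma(i)}\bcn$; the same reasoning shows $\pih_{\sigma(i)}\ihom(q) \neq \pih_{\sigma(i)} y_J$, since equality would again push the witnessing edge into coordinate $\sigma(i)$. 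Because $\yfam$ is a clique and all $y_J$ with $i \notin J$ share $\sigma(i)$-coordinate $p'$, the $2^{n-1}$ vertices $\{\pih_{\sigma(i)} y_J : i \notin J\}$ are pairwise distinct and pairwise adjacent in $\pih_{\sigma(i)}\bcn$; adjoining the new vertex $\pih_{\sigma(i)}\ihom(q)$ yields a clique of $2^{n-1}+1$ vertices there, which is impossible since by \autoref{lem:clique-ext} such an $(n-1)$-fold co-normal product of graphs of maximal degree $1$ has no clique of size exceeding $2^{n-1}$. Hence $\pi_{\sigma(i)}\ihom(q) = p = \pi_{\sigma(i)}\ihom(x_0) = \ihom_i(\pi_i x_0)$.

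The step I expect to be the main obstacle is the middle one: once $\pi_{\sigma(i)}\ihom(q) \neq p$ is assumed, pinning down that each edge $\ihom(q) \sim y_J$ must avoid coordinate $\sigma(i)$ and that $\pih_{\sigma(i)}\ihom(q)$ is a genuinely new vertex, so that the clique-extension bound of \autoref{lem:clique-ext} is actually violated. The remaining pieces — the adjacency of $q$ to the face $\{x_J : i \notin J\}$ of $T(x_0)$, and the evaluation $\pi_{\sigma(i)} y_J = p'$ via the componentwise description of $\ihom$ on $E$ — are routine. (For $n \geq 2$ a coordinate other than $\sigma(i)$ is always available where the argument needs one; for $n = 1$ the statement was already disposed of.)
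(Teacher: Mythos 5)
Your proof is correct and follows essentially the same route as the paper's: both consider the $2^{n-1}$ vertices of $T(x_0)$ whose $i$-th coordinate is the neighbour of $\pi_i x_0$, note that $q$ is adjacent to all of them, and show that $\pi_{\sigma(i)}\ihom(q) \neq \ihom_i(\pi_i x_0)$ would force a clique of $2^{n-1}+1$ vertices in $\pih_{\sigma(i)}\bcn$, contradicting \autoref{lem:clique-ext}. Your write-up is if anything slightly more careful, as it explicitly checks the distinctness and pairwise adjacency of the projected vertices that the paper leaves implicit.
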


\begin{proof} 

  This is trivial for the $n=1$ case (since then $q \in E$ or the claim is vacuous), so we will assume $n > 1$.

  Let $(x_j)_{j \in J}$ be the subfamily of $\xfam$ for that satisfies $\forall j \in J, \pi_i x_j \sim \pi_i x_0$. We then have that $\card{J} = 2^{n-1}$ and $\forall j \in J, \pi_{\sigma(i)}y_j \sim \pi_{\sigma(i)} y_0$. Note also that since $q \sim x_j$, we have $\ihom(q) \sim y_j$ for each $j \in J$.

  Suppose that $\pi_{\sigma(i)} \ihom(q) \neq \pi_{\sigma(i)} y_0$. This implies that $\pi_{\sigma(i)}\ihom(q) \nsim \pi_{\sigma(i)}y_j$ for each $j \in J$, from which (by way of $\ihom(q) \sim y_j$) we must have $\pih_{\sigma(i)}\ihom(q) \sim \pih_{\sigma(i)}y_j$. This induces a clique of $2^{n-1}+1$ vertices $\{\pih_{\sigma(i)}\ihom(q)\} \cup \{\pih_{\sigma(i)}y_j  \colon  j \in J\}$ in $\pih_{\sigma(i)}\bcn$, which is impossible by \autoref{lem:clique-ext}. We thus have that $\pi_{\sigma(i)} \ihom(q) = \pi_{\sigma(i)}y_0 = \ihom_i (\pi_i x_0)$.
\end{proof}

Since for each $x_i \in S_i$ there exists an $x \in E$ with $\pi_i x = x_i$, \autoref{lem:interior} concludes our proof of  \autoref{thm:factors}.

\subsection{Applications to plasticity}

We begin with a straightforward corollary of  \autoref{thm:factors}.

\begin{theorem} \label{thm:banach-factors}
\annotation  Let $Z \coloneqq \bigoplus_{i \in [n]} X_i$ and let $\ihom \colon B_Z \to B_Z$ be a non-contractive function. Then there is some permutation $\sigma \colon [n] \to [n]$ and a family of non-contractive functions $\ihom_i \colon S_i \to S_{\sigma(i)}$ such that for all points $x \in B_Z$ and all $i \in [n]$ we have 
  $\pi_i x \in S_i \Rightarrow \pi_{\sigma(i)} G(x) = \ihom_i (\pi_i x)$.
\end{theorem}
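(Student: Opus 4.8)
The plan is to reduce the statement to its purely combinatorial counterpart \autoref{thm:factors} via the dictionary fixed in \autoref{sec:main_notation}. First I would check that the graph avatars genuinely have the required shape. Put the adjacency $a \sim b \iff \norm{a - b} = 2$ on each $B_i$: if $\norm{a - b} = 2$ with $a, b \in B_i$, then $\norm{a} + \norm{b} \ge \norm{a - b} = 2 \ge \norm{a} + \norm{b}$, so the triangle inequality is tight and $\norm{a} = \norm{b} = 1$; strict convexity of $X_i$ then forces $a$ and $-b$ to be positive multiples of one another, hence $b = -a$. Thus $B_i$ is a disjoint union of isolated vertices (the non-unit points) and antipodal edges, a graph of maximal degree $1$, with at least one edge since $X_i$ is nontrivial. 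Because the norm of $Z$ is a coordinatewise maximum, $\norm{u - v} = 2$ in $B_Z$ holds iff $\norm{u_i - v_i} = 2$ for some $i$, so the graph on $B_Z$ carrying this adjacency is exactly the co-normal product $\bcn$ of the $B_i$; likewise $E$, the set of extreme points of $B_Z$, consists of the vertices all of whose coordinates lie on the spheres $S_i$, matching the combinatorial $E$. Finally, non-contractivity makes $\ihom$ injective, and $\norm{u - v} = 2$ gives $2 \le \norm{\ihom(u) - \ihom(v)} \le 2$, so $\ihom$ sends $\bcn$-edges to $\bcn$-edges; hence $\ihom \colon \bcn \to \bcn$ is an injective homomorphism.

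Now \autoref{thm:factors} furnishes a permutation $\sigma$ and maps $\ihom_i \colon S_i \to S_{\sigma(i)}$ with $\pi_i x \in S_i \Rightarrow \pi_{\sigma(i)} \ihom(x) = \ihom_i(\pi_i x)$ for all $x \in \bcn$ and all $i$ --- which is verbatim the conclusion sought, apart from checking that each $\ihom_i$ is non-contractive for the metric of $X_{\sigma(i)}$. To that end, fix $a, a' \in S_i$; using that each $X_j$ is nontrivial, choose $c_j \in S_j$ for $j \ne i$ and let $e, e' \in E$ satisfy $\pi_i e = a$, $\pi_i e' = a'$, and $\pi_j e = \pi_j e' = c_j$ for $j \ne i$. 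For every $j \ne \sigma(i)$ we have $\sigma^{-1}(j) \ne i$, so applying \autoref{thm:factors} at index $\sigma^{-1}(j)$ to $e$ and to $e'$ gives $\pi_j \ihom(e) = \ihom_{\sigma^{-1}(j)}(\pi_{\sigma^{-1}(j)} e) = \ihom_{\sigma^{-1}(j)}(\pi_{\sigma^{-1}(j)} e') = \pi_j \ihom(e')$. Hence $\ihom(e) - \ihom(e')$ is supported on the single coordinate $\sigma(i)$, where it equals $\ihom_i(a) - \ihom_i(a')$, so $\norm{\ihom(e) - \ihom(e')} = \norm{\ihom_i(a) - \ihom_i(a')}$; since also $\norm{e - e'} = \norm{a - a'}$, non-contractivity of $\ihom$ gives $\norm{a - a'} = \norm{e - e'} \le \norm{\ihom(e) - \ihom(e')} = \norm{\ihom_i(a) - \ihom_i(a')}$, as required.

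I expect no serious obstacle here: the whole content is the faithfulness of the translation, where strict convexity is exactly what makes the coordinate graphs $B_i$ capture the geometry of the spheres $S_i$, and the rest is bookkeeping the paper has already set up. The one new (if short) idea is the choice of $e, e'$ differing only in coordinate $i$, so that after applying $\ihom$ their difference concentrates in coordinate $\sigma(i)$ and the outer $\infty$-norm collapses onto the $\sigma(i)$-component, transferring non-contractivity from $\ihom$ to $\ihom_i$.
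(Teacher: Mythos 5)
Your proposal is correct and takes essentially the same route as the paper, which proves this theorem in a single sentence by applying \autoref{thm:factors} to the graph on $B_Z$ with edge set $\{\{x,x'\} \colon \norm{x-x'}=2\}$ and $\ihom$ as the injective homomorphism. You additionally spell out what the paper leaves implicit --- that strict convexity makes each $B_i$ a graph of maximal degree $1$ whose co-normal product is the graph on $B_Z$, and, via the pair $e,e'$ differing only in coordinate $i$, that each $\ihom_i$ inherits non-contractivity from $\ihom$ --- which is a sound and welcome completion of the argument.
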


\autoref{thm:banach-factors} follows from applying  \autoref{thm:factors} to the graph with vertices in $B_Z$, edges $\{\{x,x'\} \colon \norm{x-x'}=2\}$ and $\ihom$ as the injective homomorphism.

This can be applied to prove some more natural theorems concerning plasticity.

\begin{theorem} \label{thm:natural}
\annotation
  Let $\lip\colon B_Z \to B_Z$ be a 1-Lipschitz bijection. If $\lip$ maps extreme points to extreme points, or $\lip(S_Z) \subseteq S_Z$, then $\lip$ is an isometry.
\end{theorem}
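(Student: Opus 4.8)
The plan is to route everything through the non-contractive bijection $\ihom \coloneqq \lip^{-1}$ --- it is non-contractive precisely because $\lip$ is $1$-Lipschitz --- and apply \autoref{thm:banach-factors} to it, obtaining a permutation $\sigma$ and non-contractive maps $\ihom_i \colon S_i \to S_{\sigma(i)}$ with $\pi_i x \in S_i \Rightarrow \pi_{\sigma(i)}\ihom(x) = \ihom_i(\pi_i x)$. The key initial observation is that this relation is self-reinforcing on the sphere: whenever $\pi_i x \in S_i$ we get $\pi_{\sigma(i)}\ihom(x) = \ihom_i(\pi_i x) \in S_{\sigma(i)}$, so $\ihom(S_Z) \subseteq S_Z$ and $\ihom(E) \subseteq E$ hold unconditionally, and $\ihom$ restricted to $E$ is the twisted product $e \mapsto (\ihom_{\sigma^{-1}(j)}(\pi_{\sigma^{-1}(j)}e))_{j}$. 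Since $\ihom$ is a bijection we get $E \subseteq \lip(E)$ and $S_Z \subseteq \lip(S_Z)$, so the hypothesis upgrades to the equalities $\lip(E) = E$ (extreme-point case) or $\lip(S_Z) = S_Z$ (sphere case). In the extreme-point case $\lip(E) = E$ forces each $\ihom_i$ to be bijective; writing $\lip_i \coloneqq \ihom_i^{-1}$, the map $\lip|_E$ is then the twisted product of the $\lip_i$, and testing the $1$-Lipschitz inequality for $\lip$ on pairs of extreme points that differ in a single component shows each $\lip_i$ is $1$-Lipschitz.

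Next I would record two elementary features of the $\ell_\infty$-geometry of $Z$, both verified componentwise: $\max_{e \in E}\norm{z - e} = \norm{z} + 1$ and $\operatorname{dist}(z, S_Z) = 1 - \norm{z}$ for every $z \in B_Z$. Combining the first with $\lip(E) = E$ (using the non-expansiveness of $\lip$ and the non-contractiveness of $\ihom$) yields $\norm{\lip(w)} \le \norm{w}$ in the extreme-point case; combining the second with $\lip(S_Z) = S_Z$ yields, after the symmetric pass, the exact identity $\norm{\lip(w)} = \norm{w}$ in the sphere case. In particular $\lip$ takes each sphere $\norm{\cdot} = r$ into the ball $\norm{\cdot} \le r$, a normalization the next step relies on.

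The heart of the proof is to promote the componentwise description of $\lip$ from $E$ to all of $B_Z$. Given $w \in B_Z$ and an index $i$, the point $w$ lies \emph{metrically between} the two points obtained from $w$ by replacing its $\sigma(i)$-component with $\pm$ its normalization and pushing every other component onto its sphere; these two outer points lie in $S_Z$ at distance $2$, hence so do their $\lip$-images (since $\ihom$ is non-contractive), which forces $\lip$ to preserve the betweenness. Strict convexity of the $X_j$ enters twice here --- it identifies distance-$2$ pairs with antipodal pairs, and it turns metric betweenness inside a single component into honest segment-betweenness --- and lets me solve the resulting $\ell_\infty$ equations to read off $\pi_i\lip(w) = \overline{\lip_i}(\pi_{\sigma(i)}w)$, where $\overline{\lip_i} \colon B_{\sigma(i)} \to B_i$ is the positively homogeneous extension of $\lip_i$, at least when the other components of $w$ already lie on their spheres. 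Iterating over the number of non-extreme components of $w$ and using continuity of $\lip$, I would propagate $\pi_i\lip(w) = \overline{\lip_i}(\pi_{\sigma(i)}w)$ to all of $B_Z$, exhibiting $\lip$ as the twisted product $\bigoplus_i \overline{\lip_i}$.

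The hard part will be this final propagation together with the concluding isometry claim, and these are where strict convexity and the exact normalization must be pushed hardest. Even granting $\lip = \bigoplus_i \overline{\lip_i}$, one only knows each $\overline{\lip_i} \colon B_{\sigma(i)} \to B_i$ is a $1$-Lipschitz bijection (equivalently, that $\overline{\ihom_i}$ is non-contractive), and the reverse inequality is not automatic. Since $X_i$ and $X_{\sigma(i)}$ are strictly convex, what remains is exactly the statement that a $1$-Lipschitz bijection between the unit balls of two strictly convex Banach spaces is an isometry --- a two-space version of the Cascales--Kadets--Orihuela--Wingler plasticity theorem, obtainable from the existing analysis of non-expansive bijections of balls, or provable as a self-contained lemma in the strictly convex setting. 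With that in hand, $\lip$ is a permutation of the summands composed with isometries on each summand, hence an isometry.
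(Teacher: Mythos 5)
Your outline tracks the paper's proof quite closely in its core: the factorization via \autoref{thm:banach-factors}, the observation that $\ihom(S_Z)\subseteq S_Z$ and $\ihom(E)\subseteq E$ hold automatically so that the hypotheses upgrade to equalities, the bijectivity of the $\ihom_i$ and $1$-Lipschitzness of the $\lip_i$, and above all the central propagation step. Your ``metric betweenness'' argument --- trapping $\pi_i\lip(w)$ in the intersection of two balls centred at antipodal points whose radii sum to $2$, then invoking strict convexity --- is exactly the paper's \autoref{lem:g-homogeneous}, run as the same induction on the number of components of $w$ not on their spheres. The one substantive divergence at the end is legitimate but heavier than necessary: once $\lip$ is the twisted product of $1$-Lipschitz bijections $\overline{\lip_i}\colon B_{X_{\sigma(i)}}\to B_{X_i}$ with strictly convex codomain, you invoke the two-space plasticity theorem for strictly convex targets (available in the literature in the spirit of \autocite{zavarzina:2017}), whereas the paper only needs that the twisted product is positively homogeneous, bijective, sphere-preserving and $1$-Lipschitz, and closes with the more elementary \autocite[Lemma 2.5]{cascales:2016}.

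There is, however, a genuine gap in your treatment of the sphere case. From $\lip(S_Z)\subseteq S_Z$ you derive $\lip(S_Z)=S_Z$ and $\norm{\lip(w)}=\norm{w}$, but your ``heart of the proof'' needs as its base case that $\lip$ restricted to $E$ is the twisted product of the $\lip_i$ --- and for that you need $\lip(E)\subseteq E$ (equivalently $E\subseteq\ihom(E)$) together with surjectivity of each $\ihom_i$, neither of which you establish under the sphere hypothesis; norm preservation alone does not give them. The paper supplies the missing ideas in \autoref{lem:bijective-factors} and \autoref{lem:sphere-implies-extreme}: given $y\in S_{\sigma(i)}$, choose $q\in S_Z$ with $\pi_{\sigma(i)}q=y$ and \emph{all other components strictly inside their balls}; its preimage $x\in S_Z$ must then have $\pi_j x\in S_j$ exactly for $j=i$, forcing $\ihom_i(\pi_i x)=y$, so each $\ihom_i$ is surjective; and once all $\ihom_i$ are bijective one solves componentwise for a preimage in $E$ of any point of $E$. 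Without this reduction the sphere half of the theorem does not follow from your argument as written. The extreme-point half, by contrast, is sound and essentially identical to the paper's.
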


Our proof of \autoref{thm:natural} draws upon Nikita Leo's work in \autocite{haller:2022}
for its outline.

We begin with some graph-theoretic lemmas mirroring the conditions of \autoref{thm:natural}.

\begin{lemma} \label{lem:bijective-factors}

  If $S \subseteq \ihom(S)$ or $E \subseteq \ihom(E)$, then each $\ihom_i$ is a bijection.
\end{lemma}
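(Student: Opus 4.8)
The plan is to peel off surjectivity of the maps $\ihom_i$, since their injectivity is essentially free. Indeed, if $s,s'\in S_i$ satisfy $\ihom_i(s)=\ihom_i(s')$, choose $e,e'\in E$ that agree in every coordinate except the $i$-th, where they take the values $s$ and $s'$ respectively (possible because every $S_j$ is nonempty); \autoref{thm:factors} then forces $\ihom(e)$ and $\ihom(e')$ to agree in all coordinates — here we use that $\sigma$ is a bijection of $[n]$ — so $e=e'$ by injectivity of $\ihom$, whence $s=s'$. Thus each $\ihom_{i_0}$ is injective, and it remains to show it is onto $S_{\sigma(i_0)}$; I will do this by contradiction, fixing some $t\in S_{\sigma(i_0)}\setminus\ihom_{i_0}(S_{i_0})$ and ruling it out under each of the two hypotheses.

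Under $E\subseteq\ihom(E)$ this is immediate: pick $e'\in E$ with $\pi_{\sigma(i_0)}e'=t$, write $e'=\ihom(e)$ with $e\in E$, and apply \autoref{thm:factors} in coordinate $i_0$ (legitimate since $\pi_{i_0}e\in S_{i_0}$), obtaining $t=\pi_{\sigma(i_0)}\ihom(e)=\ihom_{i_0}(\pi_{i_0}e)\in\ihom_{i_0}(S_{i_0})$, contradicting the choice of $t$. Under $S\subseteq\ihom(S)$ the trick is to probe $\ihom$ with a vertex having \emph{exactly one} spherical coordinate: let $u\in B$ have $\pi_{\sigma(i_0)}u=t$ and $\pi_k u\notin S_k$ for every $k\neq\sigma(i_0)$ — such $u$ exists because each $B_k$ contains a vertex of degree $0$, the graph counterpart of the centre of a ball. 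Since $t$ has a neighbour in $B_{\sigma(i_0)}$, the vertex $u$ has a neighbour in $\bcn$, so $u\in S\subseteq\ihom(S)$ and $u=\ihom(w)$ for some $w\in S$. Because $w\in S$, there is a coordinate $j$ with $\pi_j w\in S_j$, and then \autoref{thm:factors} gives $\pi_{\sigma(j)}u=\pi_{\sigma(j)}\ihom(w)=\ihom_j(\pi_j w)\in S_{\sigma(j)}$; but the $\sigma(i_0)$-th coordinate is the only one in which $u$ is spherical, so $\sigma(j)=\sigma(i_0)$, i.e. $j=i_0$. Hence $\pi_{i_0}w\in S_{i_0}$ and $t=\pi_{\sigma(i_0)}u=\ihom_{i_0}(\pi_{i_0}w)\in\ihom_{i_0}(S_{i_0})$, again a contradiction. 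In both cases $\ihom_{i_0}$ is therefore onto, and, being injective, a bijection.

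The only step I expect to demand genuine thought is the construction of the test vertex $u$ in the second case — it must look like an extreme point in a single slot and like an interior point in all the others. This is also the point at which the standing assumption that each $B_i$ has a vertex with no neighbour is used; the statement genuinely fails without it (e.g. if some $S_j=B_j$), so the write-up should flag that dependence. Everything else, including the auxiliary observations $\ihom(S)\subseteq S$ and $\ihom(E)\subseteq E$ (worth stating for orientation, though not strictly needed), is routine.
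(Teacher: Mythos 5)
Your proof is correct and follows essentially the same route as the paper's: the $E$-case is handled by pulling an extreme preimage back through \autoref{thm:factors}, and the $S$-case by probing $\ihom$ with a test vertex that is spherical in exactly the coordinate $\sigma(i_0)$ (the paper's proof makes the same implicit use of a degree-zero vertex in each $B_k$, which in the unit-ball setting is $0$). Your explicit injectivity check is redundant, since each $\ihom_i$ is already a local isomorphism by construction, but it is harmless.
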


Note that the conditions $S \subseteq \ihom(S)$ and $E \subseteq \ihom(E)$ in \autoref{lem:bijective-factors} are equivalent to $\lip(S) \subseteq S$ and $\lip(E) \subseteq E$ respectively in the setting of \autoref{thm:natural}.

\begin{proof} 

  It is enough to show that each $\ihom_i$ is surjective, i.e. that $y \in \ihom_i (S_i)$ for any $y \in S_{\sigma(i)}$.

  We shall first consider the case where $S \subseteq \ihom(S)$.

  First, fix a point $q \in S$ such that $\pi_{\sigma(i)} q = y$ and $\pi_j q \notin S_j$ for all $j \neq \sigma(i)$. By $S \subseteq \ihom(S)$, we may fix $x \in S$ such that $\ihom(x) = q$. Let $J = \{j \in [n] \colon \pi_j x \in S_j\}$. From  \autoref{thm:factors}, we know that $\pi_{\sigma(j)} \ihom(x) \in S_{\sigma(j)}$ for all $j \in J$. It follows that $J \subseteq \{i\}$. Since $x \in S$, we have that $J$ is nonempty, from which $J = \{i\}$. This allows us to use  \autoref{thm:factors} to conclude that $\ihom_i (\pi_i x) = \pi_{\sigma(i)} \ihom(x) = y$, so $y \in \ihom_i (S_i)$.

  We now consider the case where $E \subseteq \ihom(E)$.

  Fix a point $q \in E$ such that $\pi_{\sigma(i)} q = y$. Since $q \in E$, there is some $x \in E$ for which $\ihom(x) = q$, and by  \autoref{thm:factors}, we have $\ihom_i (\pi_i x) = \pi_{\sigma(i)} \ihom(x) = y$.
\end{proof}

\begin{lemma} \label{lem:sphere-implies-extreme}

  If $S \subseteq \ihom(S)$, then $E \subseteq \ihom(E)$.
\end{lemma}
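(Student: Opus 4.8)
The plan is to show that under the hypothesis $S \subseteq \ihom(S)$, every extreme point $e \in E$ lies in the image $\ihom(E)$. Fix $e \in E$; we want to produce $x \in E$ with $\ihom(x) = e$. The natural strategy is to first find \emph{some} preimage of $e$ under $\ihom$ using the weaker covering hypothesis on $S$, and then argue that this preimage must in fact be extreme. First I would apply Lemma~\ref{lem:bijective-factors}, which tells us (since $S \subseteq \ihom(S)$) that each local isomorphism $\ihom_i \colon S_i \to S_{\sigma(i)}$ is a bijection. In particular, for each $i$ we can set $x_i' := \ihom_i^{-1}(\pi_{\sigma(i)} e) \in S_i$, and assemble the candidate vertex $x := (x_0', \ldots, x_{n-1}') \in B$. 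Since every component $x_i'$ lies in $S_i$, we have $x \in E$ by the definition of $E$.

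The next step is to verify that $\ihom(x) = e$. Since $x \in E$ and each $\pi_i x \in S_i$, Theorem~\ref{thm:factors} applies to give $\pi_{\sigma(i)} \ihom(x) = \ihom_i(\pi_i x) = \ihom_i(x_i') = \pi_{\sigma(i)} e$ for every $i \in [n]$. As $\sigma$ is a permutation of $[n]$, the components $\pi_{\sigma(i)}$ for $i \in [n]$ exhaust all components of $\bcn$, so $\ihom(x)$ and $e$ agree in every component, hence $\ihom(x) = e$. This shows $e \in \ihom(E)$, and since $e$ was arbitrary, $E \subseteq \ihom(E)$.

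I expect the only subtlety — and it is mild — to be the bookkeeping that ensures the assembled vertex $x$ really is a vertex of $\bcn$ (i.e.\ that a tuple of vertices, one from each factor $B_i$, is a vertex of the co-normal product) and that $x \in E$; both are immediate from the way $B$ and $E$ are defined in Section~\ref{sec:main_notation}. The genuine content is entirely front-loaded into Lemma~\ref{lem:bijective-factors} (surjectivity, hence bijectivity, of the $\ihom_i$) and Theorem~\ref{thm:factors} (the componentwise description of $\ihom$ on sphere-coordinates); once those are in hand the argument is a short direct construction with no real obstacle.
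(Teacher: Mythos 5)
Your proposal is correct and follows essentially the same route as the paper: invoke Lemma~\ref{lem:bijective-factors} to get that each $\ihom_i$ is a bijection, define $x$ componentwise by $\ihom_i(\pi_i x) = \pi_{\sigma(i)} e$, and conclude $\ihom(x) = e$ via Theorem~\ref{thm:factors}. The extra bookkeeping you flag (that the assembled tuple lies in $E$) is handled implicitly in the paper and is indeed immediate from the definitions.
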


\begin{proof} 

  Fix any point $y \in E$. We aim to construct a point $x \in E$ such that $\ihom(x)=y$.

  By $S \subseteq \ihom(S)$ and \autoref{lem:bijective-factors}, we have that each $\ihom_i$ is a bijection, so we may define $x$ such that $\ihom_i (\pi_i x) = \pi_{\sigma(i)} y$ for each $i \in [n]$. By  \autoref{thm:factors}, we have that $\pi_{\sigma(i)} \ihom(x) = \ihom_i (\pi_i x) = \pi_{\sigma(i)} y$, so $\ihom(x) = y$ as desired.
\end{proof}

We now proceed with more geometric results for which a graph-theoretic analogue has not been recovered. Whenever \autoref{lem:bijective-factors} is applicable, we define $\lip_i = \ihom_i^{-1}$ in analogy with the relation $\lip = \ihom^{-1}$.

For convenience in what follows, we define the functions $\gamma_i  \colon B_i \to B_{\sigma(i)}$ such that
$\gamma_i (x) =
\begin{cases}\norm{x} \ihom_i (\frac{x}{\norm{x}}) &\text{if}\ x \neq 0\\ 0 &\text{otherwise}\end{cases}$, and $\phi_i \colon B_{\sigma(i)} \to B_i$ as $\phi_i = \gamma_i^{-1}$. We additionally define
$\gamma, \phi \colon B_Z \to B_Z$ as $\pi_{\sigma(i)} \gamma(x) = \gamma_i (\pi_i x)$, and
$\pi_i \phi(x) = \phi_i (\pi_{\sigma(i)} x)$.

The reader may readily verify that $\phi = \gamma^{-1}$.

\begin{definition} 

  Given a $1$-Lipschitz bijection $\lip \colon B_Z \to B_Z$ and $\ihom = \lip^{-1}$ satisfying the conditions of \autoref{lem:bijective-factors}, $\ihom$ is said to be \textit{homogeneous in $k$ components} if for all $x \in B_Z$ such that $x$ has norm $1$ on at least $n-k$ components, we have $\ihom(x) = \gamma(x)$. Analogously, we say that $\lip$ is homogeneous in $k$ components when, for the same $x$, we have $\lip(x) = \phi(x)$.
\end{definition}

\begin{lemma} \label{lem:homogeneity-equiv}
\annotation
  If $\lip(E) \subseteq E$, then
  the function $\ihom$ is homogeneous in $k$ components if and only if
  $\lip$ is also.
\end{lemma}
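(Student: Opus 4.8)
The plan is to notice that both halves of the biconditional are statements about the restriction of a map to a single, very rigid set. Put
\[N_k \coloneqq \{x \in B_Z : \card{\{i \in [n] : \pi_i x \in S_i\}} \geq n-k\},\]
the set appearing in the definition of homogeneity. I would first record the standing hypotheses: from $\lip(E) \subseteq E$ and $\ihom = \lip^{-1}$ we get $E = \ihom(\lip(E)) \subseteq \ihom(E)$, so \autoref{lem:bijective-factors} applies, every $\ihom_i$ is a bijection, $\lip_i = \ihom_i^{-1}$ and $\gamma, \phi = \gamma^{-1}$ are well-defined mutually inverse bijections of $B_Z$; in particular the notion of homogeneity is meaningful for both $\ihom$ and $\lip$.

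The one genuinely geometric ingredient is that each $\gamma_i$ (and hence each $\phi_i = \gamma_i^{-1}$) preserves the norm: for $v \neq 0$ we have $\gamma_i(v) = \norm{v}\,\ihom_i(v/\norm{v})$ with $\ihom_i(v/\norm{v}) \in S_{\sigma(i)}$ of norm $1$, so $\norm{\gamma_i(v)} = \norm{v}$, and $\gamma_i(0) = 0$. Consequently $\norm{\pi_{\sigma(i)}\gamma(x)} = \norm{\pi_i x}$ for every $x \in B_Z$, so $\pi_i x \in S_i \iff \pi_{\sigma(i)}\gamma(x) \in S_{\sigma(i)}$, and the same equivalence holds with $\gamma$ replaced by $\phi$. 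Since $\sigma$ is a bijection of $[n]$, this shows $\gamma(N_k) \subseteq N_k$ and $\phi(N_k) \subseteq N_k$; combined with $\gamma, \phi$ being inverse bijections of $B_Z$ this forces $\gamma(N_k) = N_k = \phi(N_k)$, so $\gamma|_{N_k}$ and $\phi|_{N_k}$ are mutually inverse bijections of $N_k$.

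Now the definitions read literally: $\ihom$ is homogeneous in $k$ components iff $\ihom|_{N_k} = \gamma|_{N_k}$, and $\lip$ is homogeneous in $k$ components iff $\lip|_{N_k} = \phi|_{N_k}$. For the forward direction assume $\ihom|_{N_k} = \gamma|_{N_k}$ and take any $x \in N_k$; then $\phi(x) \in N_k$, so $\ihom(\phi(x)) = \gamma(\phi(x)) = x$, which means $\lip(x) = \phi(x)$ because $\lip = \ihom^{-1}$. The converse is the mirror image: from $\lip|_{N_k} = \phi|_{N_k}$ and $x \in N_k$ we get $\gamma(x) \in N_k$, hence $\lip(\gamma(x)) = \phi(\gamma(x)) = x$, i.e. $\ihom(x) = \gamma(x)$.

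There is no analytic obstacle here — the whole content is identifying $N_k$ as a set stabilized by $\gamma$ and $\phi$. The only place to be careful is the bookkeeping with the permutation $\sigma$: one must check that $\gamma$ sends a point that is extreme on a set $I$ of components to a point that is extreme on the set $\sigma(I)$ (not merely on \emph{some} set of the same size), and that $\gamma|_{N_k}$ is \emph{onto} $N_k$ rather than just into it. Both follow immediately from the norm-preservation of $\gamma_i$ together with its bijectivity, which is why I would isolate that fact first.
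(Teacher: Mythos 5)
Your proof is correct and follows essentially the same route as the paper's: both arguments apply the homogeneity of $\ihom$ to the point $\phi(x)$ (which the paper constructs explicitly, coordinate by coordinate, as the auxiliary point $q$) and use $\gamma \circ \phi = \mathrm{id}$ to conclude $\lip(x) = \phi(x)$, and symmetrically for the converse. Your packaging via the $\gamma$- and $\phi$-stability of the set $N_k$ is a cleaner formulation that makes explicit the norm-preservation of $\gamma_i$ and the bookkeeping with $\sigma$ that the paper verifies by direct computation.
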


\begin{proof} 

  We will show that $\ihom$ being homogeneous in $k$ components implies that $\lip$ is also. The proof of the converse is analogous.

  Let us have $J \subseteq [n]$ with $\card{J} \leq k$ and $x \in B_Z$ such that $x$ has norm 1 on components $[n] \setminus J$.

  First, define $a_i = \norm{\pi_i x}$ and fix $y \in E$ such that
  $\pi_i x = a_i \pi_i y$. Then fix $q \in B_Z$ such that $\pi_i q = a_{\sigma(i)} \pi_i \lip(y)$, and note that $q$ has norm 1 in exactly as many components as $x$, since $f(y) \in E$. Consequently, the homogeneity of $\ihom$ in $k$ components applies to $q$, from which

  \[ \pi_{\sigma(i)} \ihom(q) = \gamma_i (\pi_i q) = 0 = \pi_{\sigma(i)} x "      if"
  \pi_i q = 0 \]
  and
  \[ \pi_{\sigma(i)} g(q) = \gamma_i (\pi_i q) = \norm{\pi_i q} \ihom_i (\frac{\pi_i q}{\norm{\pi_i q}}) = a_{\sigma(i)} \ihom_i (\pi_i \lip(y)) =_4 a_{\sigma(i)} \pi_{\sigma(i)} y = \pi_{\sigma(i)} x \]

  otherwise, where equality 4 follows from $\lip(y) \in E$ and \autoref{thm:banach-factors}.
  Consequently, we have $\ihom(q) = x$ and $\gamma(q) = x$, from which
  $q = f(x)$ and $q = \phi(x)$, so $f(x) = \phi(x)$.
\end{proof}

\begin{lemma} \label{lem:g-homogeneous}

  \annotation
  If $\lip(E) \subseteq E$, then $\ihom$ is homogeneous in $n$ components, i.e. $g = \gamma$.
\end{lemma}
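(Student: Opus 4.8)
The plan is to prove, by induction on $k$, that $\ihom$ is homogeneous in $k$ components for $k = 0, 1, \ldots, n$; the case $k = n$ is exactly the claim $\ihom = \gamma$, and by \autoref{lem:homogeneity-equiv} this simultaneously gives homogeneity of $\lip$. Two standing facts are used throughout: each $\ihom_i$ is a bijection (\autoref{lem:bijective-factors}, applicable because $\lip(E) \subseteq E$), so $\gamma_i$, $\phi_i$, $\lip_i$, $\gamma$, $\phi$ are all defined; and $\ihom_i(-e) = -\ihom_i(e)$ for every $e \in S_i$, since $\ihom_i$ non-contractive forces $\norm{\ihom_i(e) - \ihom_i(-e)} = 2$ in $S_{\sigma(i)}$, whence antipodality by strict convexity of $X_{\sigma(i)}$. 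The base case $k = 0$ is immediate from \autoref{thm:banach-factors}: if $x \in E$ then $\pi_i x \in S_i$ for all $i$, so $\pi_{\sigma(i)} \ihom(x) = \ihom_i(\pi_i x) = \gamma_i(\pi_i x)$.

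For the inductive step $k \to k+1$, I would take $x \in B_Z$ with norm $1$ on exactly the components outside a set $J$ of size $k+1$ (otherwise the hypothesis applies directly), write $a_j = \norm{\pi_j x}$ and $u_j = \pi_{\sigma(j)} \ihom(x)$, and observe that \autoref{thm:banach-factors} already gives $u_j = \gamma_j(\pi_j x)$ for $j \notin J$; the task is the same identity for $j \in J$. The first sub-step is an a priori norm bound $\norm{u_j} \leq a_j$ for $j \in J$: replacing the $\sigma(j)$-th coordinate of $\ihom(x)$ by $u_j/\norm{u_j}$ gives a point $v$ with norm $1$ on at least $n-k$ coordinates, so $\lip(v) = \phi(v)$ by the hypothesis; since $\phi(v)$ and $x$ disagree by at least $1 - a_j$ in the $j$-th coordinate while $v$ and $\ihom(x)$ disagree by exactly $1 - \norm{u_j}$, non-expansiveness of $\lip$ (with $\lip(\ihom(x)) = x$) yields $1 - a_j \leq 1 - \norm{u_j}$. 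In particular $u_j = 0 = \gamma_j(\pi_j x)$ whenever $\pi_j x = 0$, and $\norm{u_j - \gamma_j(\pi_j x)} \leq 2a_j$ in general.

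The second sub-step handles the remaining coordinates $J^+ = \{j \in J : \pi_j x \neq 0\}$, processed in order of decreasing $a_j$. For the current $j_0$ (with $a = a_{j_0}$, $e = \pi_{j_0} x / a$), all strictly-larger coordinates having already been shown equal to $\gamma$, I would replace $\pi_{j_0} x$ by $-e$ to get a point $x'$ with norm $1$ on $n-k$ coordinates, so $\ihom(x') = \gamma(x')$ by the outer hypothesis; this point agrees with $\gamma(x)$ except in coordinate $\sigma(j_0)$, where it equals $-\ihom_{j_0}(e)$. Non-contractiveness gives $\norm{\ihom(x) - \ihom(x')} \geq \norm{x - x'} = 1 + a$, and in the $\ell_\infty$-maximum on the left every coordinate other than $\sigma(j_0)$ contributes either $0$ (coordinates outside $J$, coordinates with $\pi_l x = 0$, or already-settled coordinates) or at most $2a_l \leq 2a < 1 + a$ (not-yet-settled coordinates, whose $a_l \leq a < 1$). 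Hence the $\sigma(j_0)$-coordinate must realize the bound: $\norm{u_{j_0} + \ihom_{j_0}(e)} \geq 1 + a$; combined with $\norm{u_{j_0} + \ihom_{j_0}(e)} \leq \norm{u_{j_0}} + 1 \leq a+1$, this forces equality, so $\norm{u_{j_0}} = a$ and $\norm{u_{j_0} + \ihom_{j_0}(e)} = \norm{u_{j_0}} + \norm{\ihom_{j_0}(e)}$, whence $u_{j_0} = a\,\ihom_{j_0}(e) = \gamma_{j_0}(\pi_{j_0}x)$ by strict convexity of $X_{\sigma(j_0)}$. This closes both inductions, giving $\ihom(x) = \gamma(x)$.

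The main obstacle is the $\ell_\infty$-``shielding'' in the second sub-step: a single misbehaving coordinate of $\ihom(x)$ could absorb the distance lower bound and prevent one from reading off the others. It is defeated by the combination of (i) the a priori bound $\norm{u_j} \leq a_j$ from the first sub-step, which caps each coordinate's discrepancy at $2a_j < 2$, and (ii) the decreasing-$a_j$ ordering, which makes the still-uncontrolled discrepancies $\leq 2a_l \leq 2a_{j_0}$ strictly smaller than the target $1 + a_{j_0}$ (using $a_{j_0} < 1$). I would also verify that the small cases ($n = 1$, $|J| = 1$, or $J = [n]$) are genuinely covered by the same argument with no extra input, and keep careful track that homogeneity of $\lip$ at level $k$ is used only via \autoref{lem:homogeneity-equiv}, so that the induction is not circular.
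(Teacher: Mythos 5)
Your proof is correct, and although it shares the paper's skeleton (induction on the number $k$ of non-unit components, reduction to the indices $i \in J$, a one-coordinate normalization, a sign flip, and strict convexity), its decisive second half runs along a genuinely different track. Your first sub-step --- normalizing the $\sigma(j)$-th coordinate of $\ihom(x)$ and applying the non-expansiveness of $\lip$ to the resulting point --- is in substance the first of the paper's two inequalities and yields the same a priori bound $\norm{u_j} \leq a_j$. After that the routes diverge: the paper flips the sign on the \emph{image} side (its point $z$), applies the $1$-Lipschitz property of $\lip$ a second time, and intersects two balls in $X_i$ whose radii sum to the distance between their centers, so strict convexity pins down $\pi_i x$ directly and each $i \in J$ is handled independently. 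You instead flip the sign on the \emph{domain} side (your $x'$), invoke the non-contractiveness of $\ihom$, and must then defeat the $\ell_\infty$ shielding problem, which you do with the a priori bound together with the decreasing-$a_j$ ordering; strict convexity enters through the equality case of the triangle inequality rather than through a two-ball intersection. Both arguments are sound; the paper's is lighter on bookkeeping (no inner induction over $J^+$, no ordering), while yours shows that the second half can be driven by the non-contractiveness of $\ihom$ alone, with the inductive hypothesis consumed only via $\ihom(x') = \gamma(x')$ and the transfer in \autoref{lem:homogeneity-equiv}. One small point worth making explicit when writing this up: the point $v$ in your first sub-step is undefined when $u_j = 0$, but in that case the bound $\norm{u_j} \leq a_j$ is trivial, so nothing is lost.
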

\begin{proof} 

  We proceed by induction. The base case of $n=0$ is covered by \autoref{thm:banach-factors}.

  Suppose $\ihom$ is homogeneous in $k < n$ components. Let $J \subseteq [n]$ with
  $\card{J} = k+1$, and let $x \in B_Z$ have norm 1 on components $[n] \setminus J$. We
  already know that for $i \in [n]\setminus J$, $\pi_{\sigma(i)} \ihom(x) = \ihom_i (\pi_i x)
  = \pi_{\sigma(i)} \gamma (x)$ by
  the construction of $\ihom_i$.

  It thus suffices to show that for $i \in J$, we have $\pi_{\sigma(i)} \ihom(x) =
  \pi_{\sigma(i)} \gamma(x)$.

	Fix any $i \in J$. Define $y$ such that $\pih_{\sigma(i)} y = \pih_{\sigma(i)} \ihom(x)$ and $\norm{\pi_{\sigma(i)} \ihom(x)} \pi_{\sigma(i)} y = \pi_{\sigma(i)} \ihom(x)$. Note that
	$y$ has norm 1 on $k$ components, so we can apply the induction assumption to it.
	We also define $z$ as equal to $y$, except at $\sigma(i)$, where we flip the
	sign of the component.

	Since $\ihom_j (-x) = -\ihom_j (x)$ holds for all $j \in [n], x \in S_j$, we also have that
	$\lip_j (-x) = -\lip_j (x)$ holds for
	$j \in [n], x \in S_{\sigma(j)}$. Since $\pi_{\sigma(i)} z = -\pi_{\sigma(i)} y$, we have that
  $\lip_i (\pi_{\sigma(i)} z) = -\lip_i (\pi_{\sigma(i)} y)$, from which
  $\pi_i \lip(z) = -\pi_i \lip(y) \in S_i$ by the homogeneity of $\lip$ in $k$ components.

  We have that
  \[ \norm{\pi_i \lip(y) - \pi_i x} \leq \norm{\lip(y) - x} \leq \norm{y - \ihom(x)} = 1 - \norm{\pi_{\sigma(i)} \ihom(x)}. \]
	Similarly, we have that
  \[ \norm{-\pi_i \lip(y) - \pi_i x} = \norm{\pi_i \lip(z) - \pi_i x} \leq \norm{\lip(z) - x} \leq \norm{z - \ihom(x)} = 1 + \norm{\pi_{\sigma(i)} \ihom(x)}. \]

  This means that $\pi_i x$ lies in the
	intersection of the two balls $B(\pi_i \lip(y), 1-\norm{\pi_{\sigma(i)} \ihom(x)})$ and
	$B(-\pi_i \lip(y), 1+\norm{\pi_{\sigma(i)} \ihom(x)})$. The intersection of these is a convex
	set in $X_i$, and is contained in the sphere of each ball, since the distance between their centers is the sum of their radii. Since $X_i$ is a strictly
	convex space, this intersection can contain at most one point. Since
  $\norm{\pi_{\sigma(i)} \ihom(x)}\pi_i \lip(y)$ belongs to both balls, we must have that
	$\norm{\pi_{\sigma(i)} \ihom(x)}\pi_i \lip(y) = \pi_i x$. Since we had $\norm{\pi_i \lip(y)}=1$
	by the homogeneity of $\lip$, this gives us
  $\norm{\pi_i x} = \norm{\pi_{\sigma(i)} \ihom(x)}$.

	If $\pi_i x = 0$, we are done, since this gives us
	$\pi_{\sigma(i)} \ihom(x) = 0$. We shall proceed with the assumption that
  $\pi_i x \neq 0$.

	In this case, we have that
  $\norm{\pi_i x}\lip_i (\pi_{\sigma(i)} y) = \pi_i x$.
	Dividing both sides by $\norm{\pi_i x}$ and applying $g_i$, we get
	$\pi_{\sigma(i)} y = \ihom_i (\frac{\pi_i x}{\norm{\pi_i x}})$.
	By the definition of $y$, this gives us
  \[ \frac{\pi_{\sigma(i)} \ihom(x)}{\norm{{\pi_{\sigma(i)} \ihom(x)}}} = \frac{\pi_{\sigma(i)} \ihom(x)}{\norm{\pi_i x}} = \ihom_i \frac{\pi_i x}{\norm{\pi_i x}}, \]
	from which
	${\pi_{\sigma(i)} \ihom(x)} = \norm{\pi_i x}\ihom_i (\frac{\pi_i x}{\norm{\pi_i x}}) = \pi_{\sigma(i)} \gamma(x)$
	is immediate.
\end{proof}

We now have enough to prove \autoref{thm:natural}.

\begin{proof} 

  By \autoref{lem:bijective-factors}, we have that the functions $\lip_i$ exist,
  so $\phi$ is well-defined.
  It's clear by the construction of $\phi$ that $\phi$ is a bijection, and $\phi(\alpha x) = \alpha \phi(x)$
  for all $x \in S_Z, \alpha \in [-1,1]$. Moreover, because $\gamma(S_Z) \subseteq S_Z$ and $\gamma(B_Z\setminus S_Z) \subseteq B_Z \setminus S_Z$, we have $\gamma(S_Z) = S_Z$, so $\phi(S_Z) = S_Z$.

  By \autoref{lem:sphere-implies-extreme}, we have $\lip(E) \subseteq E$.
  By \autoref{lem:homogeneity-equiv} and \autoref{lem:g-homogeneous}, we have $\lip = \phi$,
  so $\lip$ satisfies the same properties stated above for $\phi$.

By \autocite[Lemma 2.5]{cascales:2016}
  the facts just stated about $\lip$, along with the fact that $\lip$ is $1$‑Lipschitz, are sufficient for $\lip$ to be an isometry of $B_Z$.
\end{proof}

\section*{Acknowledgements}

This work was supported by the Estonian Research Council grants PRG1901. The author thanks Rainis Haller and Nikita Leo for their comments and support.


\printbibliography
\end{document}